\title{A practical existence theorem for reduced order models based on convolutional autoencoders}
\author{Nicola Rares Franco$^1$ and Simone Brugiapaglia$^2$}
\date{\footnotesize $^1$MOX, Department of Mathematics, Politecnico di Milano, Italy\\$^2$Department of Mathematics and Statistics, Concordia University, Montreal, QC, Canada}
\newtheorem{theorem}{Theorem}[section]
\newtheorem{lemma}[theorem]{Lemma}
\newtheorem{definition}[theorem]{Definition}
\newtheorem{remark}[theorem]{Remark}
\DeclareMathOperator*{\argmin}{\arg\!min}
\DeclareMathOperator*{\essinf}{\text{ess}\;\!inf}
\DeclareMathAlphabet{\mathpzc}{OT1}{pzc}{m}{it}
\newcommand{\sterm}{n}
\newcommand{\dnnactivation}{\sigma}
\newcommand{\x}{\mathbf{x}}
\newcommand{\bias}{\mathbf{b}}
\newcommand{\latent}{m}
\newcommand{\hilbert}{\mathscr{V}}
\newcommand{\paramdomain}{\Theta}
\newcommand{\paramcomplex}{\Theta'}
\newcommand{\holo}{\textnormal{Hol}}
\newcommand{\brho}{\boldsymbol{\rho}}
\newcommand{\polyellipse}{\mathcal{E}}
\newcommand{\ha}{\mathcal{HA}}
\newcommand{\operator}{\mathcal{G}}
\newcommand{\mub}{\boldsymbol{\mu}}
\newcommand{\zetab}{\boldsymbol{\zeta}}
\newcommand{\dense}{\mathscr{F}}
\newcommand{\regularizer}{\mathcal{R}}
\newcommand{\expe}{\mathbb{E}}
\newcommand{\probability}{\varrho}
\newcommand{\opnorm}[1]{{\left\vert\kern-0.25ex\left\vert\kern-0.25ex\left\vert #1 
    \right\vert\kern-0.25ex\right\vert\kern-0.25ex\right\vert}}
\newcommand{\weight}{\mathbf{W}}
\newcommand{\review}[1]{#1}
\begin{document}

\maketitle

\renewcommand{\thetheorem}{\arabic{theorem}}

\begin{abstract}
In recent years, deep learning has gained increasing popularity in the fields of Partial Differential Equations (PDEs) and Reduced Order Modeling (ROM), providing domain practitioners with new powerful data-driven techniques such as Physics-Informed Neural Networks (PINNs), Neural Operators, Deep Operator Networks (DeepONets) and Deep-Learning based ROMs (DL-ROMs). 
In this context, 
deep 
autoencoders based on Convolutional Neural Networks (CNNs) have proven extremely effective, 
outperforming established techniques, such as the reduced basis method, when dealing with complex nonlinear problems.
However, despite the empirical success of CNN-based autoencoders, 
there are only a few theoretical results supporting 
these architectures, usually stated
in the form of universal approximation theorems. 
In particular, although the existing literature provides users with guidelines for designing convolutional autoencoders, 
the subsequent challenge of learning the latent features has been barely investigated. Furthermore, 
many 
practical questions 
remain unanswered, e.g., 
the number of snapshots needed for convergence 
or the neural network training strategy. In this work, using recent techniques from sparse high-dimensional function approximation, 
we 
fill some of these gaps by providing 
a new \emph{practical existence theorem} for CNN-based autoencoders when the parameter-to-solution map is holomorphic. This regularity assumption arises 
in many relevant classes of parametric PDEs, such as the parametric diffusion equation, for which we discuss an explicit application of our general theory. 
\end{abstract}


\section{Introduction}

Scientists and engineers 
rely on Partial Differential Equations (PDEs) to model and describe physical phenomena characterizing the behavior of systems, materials, and processes. 
In tandem with efficient numerical solvers, PDE modeling allows engineers to generate robust simulations of physical systems, effectively providing them with reliable tools for forecasting, design, and optimization. 

In practical applications, 
PDE models often involve multiple parameters, which here we 
denote as $\mub\in\mathbb{R}^{p}$, that describe the physical properties of the system and/or specify the scenario under consideration. We can think of, e.g., the viscosity coefficient in a fluid flow simulation \cite{pichi2023artificial}, the morphology of a vascular network in a biophysical model \cite{vitullo2024nonlinear}, or the permeability coefficient in a heat-transfer simulation \cite{brivio2023error}. When these parameter values remain constant, traditional numerical solvers based on, e.g., finite elements, finite differences, or finite volumes, can provide precise and reliable approximations at a computationally feasible expense. However, there are also applications where the model parameters are allowed to change and thus necessitate multiple---fast---simulations. Examples include optimal control (i.e., find the $\mub$ minimizing a given cost functional), inverse problems (i.e., retrieve $\mub$ from sensor measurements), and uncertainty quantification (i.e., $\mub$ is uncertain). For all such \textit{many-query} scenarios, the computational cost entailed by classical solvers becomes prohibitive and constitutes a major limitation.

A popular solution to these issues is 
provided by Reduced Order Models (ROMs). 
They are suitable model surrogates that seek to alleviate the computational burden associated with the aforementioned tasks by constructing low-dimensional representations that are rich enough to capture the essential features of the system. By learning from high-quality samples generated by classical solvers, ROMs can offer precise and efficient predictions, effectively restoring the feasibility of real-time simulations and their applicability to many-query scenarios. However, depending on the problem at hand, constructing accurate and reliable ROMs can be a challenging task. In fact, 
complex model features such as high-dimensional parameter spaces, strong nonlinearities and singular behaviors, pose significant challenges. 
This is well illustrated by all those problems facing the so-called \textit{Kolmogorov barrier} \cite{ahmed2020reduced, barnett2022quadratic, peherstorfer2022breaking}. 
\\\\
Recently, motivated by the impressive success 
of deep learning in a variety of fields including image recognition, natural language processing, and scientific computing, researchers have attempted to 
leverage this methodology to construct effective ROMs, leading to the development of Deep Learning-based ROMs (DL-ROMs)
\cite{franco2023deep, fresca2021comprehensive, pichi2023graph}. Experimentally, these techniques have obtained quite remarkable results. If provided with enough data and properly trained, DL-ROMs can accurately simulate fluid flows \cite{fresca2022pod-dl-rom}, as well as complex biological \cite{franco2023mesh-informed, vitullo2024nonlinear} and mechanical phenomena \cite{rosafalco2021online}. Furthermore, if complemented with suitable \emph{ad hoc} strategies, they can incorporate some key physical properties of the underlying system such as local mass conservation \cite{boon2023deep}.

In general, this research line is part of a broader trend concerning the development of deep learning algorithms for operator learning in high-dimensional spaces: in fact, a surrogate model can be interpreted as an approximation of the parameter-to-solution map of a given PDE. In this sense, there is 
a close connection between DL-ROMs and 
techniques such as DeepONets \cite{lu2021learning} and (Fourier) Neural Operators \cite{kovachki2021neural, li2020fourier}. For instance, the DeepONet algorithm can be regarded as a space-continuous version of the so-called POD-NN ROM \cite{hesthaven2018non-intrusive}, a predecessor of the POD-DL-ROM \cite{fresca2022pod-dl-rom}. Similarly, the architectures implemented in some DL-ROMs can be traced back to discrete equivalents of certain Neural Operators \cite{franco2023latent, franco2023mesh-informed}. Here, however, we shall limit our attention to the case of DL-ROMs, 
adopting a perspective commonly 
accepted in the ROM literature.
\\\\
At first, the success 
of deep learning in reduced order modeling was mostly empirical (see, e.g., \cite{fresca2021comprehensive, hesthaven2018non-intrusive, lee2020model}). However, with new mathematical insights on neural network approximation theory, such as the seminal paper 
\cite{yarotsky2017error} and subsequent developments (see, e.g., \cite{elbrachter2021deep} and references therein), DL-ROMs are now starting to develop theoretical foundations. \review{Relevant contributions in this direction include 
\cite{kutyniok2022theoretical,marcati2023exponential,schwab2023deep}, 
which are theoretical works characterized by a
major focus on approximation theory  
(practical details concerning networks type or training strategies are not addressed),} 
research on DeepONets \cite{lanthaler2022error} and Neural Operators \cite{kovachki2021universal}, and recent results on autoencoder-based ROMs (see, e.g., \cite{brivio2023error, franco2023latent, franco2023deep, liu2024generalization}).

Here, we shall focus 
on the latter class of DL-ROMs, i.e., deep learning-based surrogate models that reduce the problem complexity by leveraging deep convolutional autoencoders \cite{cheng2018deep}.
This choice is motivated by the fact that, despite being extremely popular among researchers, ROMs based on convolutional autoencoders are still lacking a comprehensive theoretical foundation. While issues like the role of convolutional blocks \cite{franco2023approximation} or the choice of the latent dimension \cite{franco2023latent, franco2023deep}  are relatively well-understood, some key practical questions are still open, especially when it comes to the actual training of these architectures. 
Our purpose for this work is to take a step further and extend the existing literature by offering additional insights on DL-ROM training,  with a particular focus on 
the challenge of learning convolutional features. To this end, we 
propose a new analysis of these architectures based on the framework of \textit{practical existence theorems}. This new paradigm was recently introduced 
in \cite{adcock2022deep, adcock2021gap} for scalar- and Hilbert-valued approximation and further extended to Banach-valued functions in \cite{adcock2022learning} \review{(see also the recent review paper \cite{adcock2024learning})}. It leverages recent advances in sparse high-dimensional polynomial approximation theory \cite{adcock2022sparse} and complements existence results for neural networks (commonly referred to as \emph{universal approximation theorems}) with more practical insights on model training, regularization and sampling.





\subsection{Main contributions} 
Let $\Omega\subset\mathbb{R}^{d}$ be a bounded domain, and let 
\[\paramdomain\ni\mub\mapsto u_{\mub}\in H^{s}(\Omega)\]
be the parameter-to-solution map of a parametrized PDE, where $\paramdomain\subset\mathbb{R}^{p}$ is the parameter space and $H^{s}(\Omega)$ denotes a suitable Sobolev space with smoothness index $s\in\mathbb{N}.$ A classical numerical solver based on, e.g., finite elements or finite differences, provides access to pointwise approximations of the PDE solution over a collection of nodes $\x_{1},\dots,\x_{N_{h}}\in\overline{\Omega},$ with $N_{h}$ being the total number of vertices constituting the spatial grid. 

Therefore, we can think of the numerical solver, also referred to as Full Order Model (FOM) in the reduced order modeling literature, as a map
\[\paramdomain\ni\mub\mapsto[u_{\mub}(\x_{1}),\dots,u_{\mub}(\x_{N_{h}})]^\top\in\mathbb{R}^{N_{h}}.\]
The purpose of DL-ROMs is to construct a Deep Neural Network (DNN) model $\Phi:\mathbb{R}^{p}\to\mathbb{R}^{N_{h}}$ such that $\Phi_{j}(\mub)\approx u_{\mub}(\x_{j})$, where $\Phi_{j}$ denotes the $j$th output neuron of $\Phi$. In the case of autoencoder-based approaches, the construction of $\Phi$ 
relies on three neural network models, namely,
\[\Psi':\mathbb{R}^{N_{h}}\to\mathbb{R}^{\latent},\quad\quad\Psi:\mathbb{R}^{\latent}\to\mathbb{R}^{N_{h}},\]
\[\phi:\mathbb{R}^{p}\to\mathbb{R}^{\latent}.\]
The first two models, the encoder and the decoder, respectively, are trained such that
\[\Psi(\Psi'([u_{\mub}(\x_{1}),\dots,u_{\mub}(\x_{N_{h}})]^\top))\approx [u_{\mub}(\x_{1}),\dots,u_{\mub}(\x_{N_{h}})]^\top.\]
In this way, by leveraging the autoencoder $\Psi\circ\Psi'$, the spatial features characterizing the solutions to the PDE can be synthesized using a smaller number of degrees of freedom, known as the ``latent'' variables. In fact, each discrete vector $\mathbf{u}_{\mub}:=[u_{\mub}(\x_{1}),\dots,u_{\mub}(\x_{N_{h}})]^\top$ can now be represented as $\Psi'(\mathbf{u}_{\mub})\in\mathbb{R}^{\latent}$, with a substantial reduction in complexity whenever $m\ll N_{h}.$ 

Conversely, the third network, $\phi$, sometimes also referred to as \textit{reduced network}, is trained to learn the parameter-to-latent-variables map,
\[\phi(\mub)\approx \Psi'([u_{\mub}(\x_{1}),\dots,u_{\mub}(\x_{N_{h}})]^\top).\]
Once all DNN modules have been trained, the encoder block $\Psi'$ can be discarded and the DL-ROM constructed by composition
, i.e.,
\[\Phi:=\Psi\circ\phi.\]
Given a new parametric instance $\mub\in\paramdomain$, the reduced network computes the corresponding latent solution, namely $\phi(\mub)$, which is then expanded by the decoder to retrieve the final output.
In other words, methods based on autoencoders are grounded on the idea of splitting the complexity of the problem into two components. On the one hand, we have the spatial complexity of PDE solutions, tackled by $\Psi'$ and $\Psi$. On the other hand, there is the inherent complexity associated with the parameter dependence of PDE solutions, addressed by $\phi$.

The existing literature provides insights on the choice of the latent dimension $\latent$ (see, e.g., \cite{franco2023deep}) and on the type of architectures, favoring the use of Convolutional Neural Networks (CNNs)---whenever possible---for the decoder module (see, e.g., \cite{franco2023approximation}). It is worth mentioning that, while the works \cite{franco2023approximation, franco2023deep} are purely theoretical, their conclusions 
are perfectly aligned with empirical evidence. In fact, researchers had long conjectured that autoencoders could compress solutions to their intrinsic dimension, dictated by the number of parameters \cite{fresca2021comprehensive, lee2020model}. Similarly, by leveraging the heuristic observation that 
discrete signals defined over hypercubic domains are roughly equivalent to RGB images, several authors had suggested the use of CNNs for the autoencoder module \cite{fresca2021comprehensive, mucke2021reduced}. Nonetheless, little is know 
about the training of these architectures, in terms of, e.g., sample size and choice of the loss function. Our main contribution, which is fully detailed in Theorem~\ref{theorem:cnn}, goes precisely in this direction.

Given a probability distribution $\probability$ over the parameter space, for each of the three architectures, $\phi$, $\Psi$ and $\Psi'$, we identify a specific class of neural network models, with the decoder $\Psi$ being convolutional, such that, with high probability, the trained DL-ROM satisfies an error bound of the form
\begin{equation*}
\expe_{\mub\sim\probability}^{1/2}\left[\;\sup_{j=1,\dots,N_{h}}|u_{\mub}(\x_{j})-\Psi_{j}(\phi(\mub))|^{2}\right]\le C\left(\sqrt{m}e^{-\frac{1}{\sqrt{2}}\gamma\tilde{N}^{1/(2p)}}+\sqrt{\frac{2m^{1-2s}}{2s-1}}\right),
\end{equation*}
where $m$ is (proportional to) the latent dimension, $\tilde{N}$ is the sample size (up to log factors), \review{whereas $\gamma>0$  and $C>0$ are two constants related to the regularity and the magnitude of the solution operator, respectively; finally, we recall, that $p$ and $s$ are the number of parameters and the smoothness of the PDE solutions, respectively.} In doing so, we also specify the sampling and the optimization procedure associated with the reduced network $\phi$, identifying a specific loss function and a corresponding regularization criterion, thus making our existence result \textit{practical}. Note that, although based on the same ideas presented in \cite{adcock2022deep}, our result is somewhat stronger. In fact, with respect to the space variable $\x$, it provides a uniform error bound, as opposed to a space-averaged one.

At its core, our derivation leverages 
the theory of 
sparse polynomial approximation of high-dimensional, holomorphic maps, and thus relies on the assumption that the parameter-to-solution map admits a suitable holomorphic extension. However, as we will explore later, this assumption is not overly restrictive. In fact, there are numerous practical cases where this condition holds true, such as the parametric diffusion equation, for which an application of Theorem~\ref{theorem:cnn} is explicitly discussed in Section~\ref{sec:param_diff}. 
Finally, we mention that our analysis is limited to the one-dimensional case, $d=1$. Generalizations to higher-dimensional domains are in order but out of the scope of this work.


\subsection{Outline}
The paper is organized as follows. First, in Section~\ref{sec:preliminaries}, we set the notation and introduce some of the basic mathematical concepts upon which our analysis in constructed, such as holomorphic extensions and neural network models. Then, in Section~\ref{sec:main} we present our main result, Theorem~\ref{theorem:cnn}, and its application to the parametric diffusion equation. The proof of the theorem, which is comprised of multiple steps, is postponed to Section~\ref{sec:proof}, together with some auxiliary results that are necessary for our construction (only some of them: the most technical ones are deferred to  Appendix~\ref{appendix:hermite} and \ref{appendix:relu}). Lastly, Section~\ref{sec:conclusions} is dedicated to a final discussion of our findings and potential avenues for future research.

\section{Preliminaries and notation}
\label{sec:preliminaries}
In this section we introduce the main notions and definitions needed to carry out our analysis. In Section~\ref{sec:holo}, we introduce the concepts of holomorphic extension 
and of hidden anisotropy. Section~\ref{sec:nn}, instead, provides the essential background on feedforward and convolutional neural networks.

\subsection{Holomorphic regularity assumption}
\label{sec:holo}

One of the key ingredients of our study is the notion of holomorphic extension. Let $\paramcomplex\subseteq\mathbb{C}^{p}$ be an open set and let $\hilbert$ be a Hilbert space. We denote by $\holo(\paramcomplex,\hilbert)$ the set of holomorphic maps from $\paramcomplex$ to $\hilbert$. More precisely, $f\in\holo(\paramcomplex,\hilbert)$ if and only if the following limit exists for all $\mathbf{z}\in\paramcomplex$ and all directions $j=1,\dots,p$:
\[\lim_{\substack{h\in\mathbb{C}\\h\to0}}\frac{f(\mathbf{z}+h\mathbf{e}_{j})-f(\mathbf{z})}{h}\in\hilbert,\]
where $\mathbf{e}_{j}=(\delta_{i,j})_{i=1}^{p}$ and $\delta_{i,j}$ denotes the Kronecker delta.

\begin{definition} {\bf(Holomorphic extension)}
    Let $(\hilbert, \|\cdot\|)$ be a Hilbert space and let $\paramdomain\subseteq\mathbb{R}^{p}$ be a set. Let $K\subseteq\mathbb{C}^{p}$ be a closed set such that $\paramdomain\subseteq K$. We say that a map $f:\paramdomain\to\hilbert$ admits a \textit{holomorphic extension} to $K$ if there exists an open set $\paramcomplex$, $K\subseteq\paramcomplex\subseteq\mathbb{C}^{p}$, and a holomorphic map $\tilde{f}\in\holo(\paramcomplex,\hilbert)$ such that $\tilde{f}_{|\paramdomain}=f.$ In this case we also set
    \begin{align}
        \|f\|_{L^{\infty}(K,\hilbert)}:=\inf\Big\{\|\tilde{f}\|_{L^{\infty}(\paramcomplex,\hilbert)}\;\;\text{s.t.}\;\;&\paramcomplex\;\text{open},\;K\subseteq\paramcomplex\subseteq\mathbb{C}^{p},\\    \nonumber&\tilde{f}\in\holo(\paramcomplex,\hilbert),\;\tilde{f}_{|\paramdomain}=f\Big\}.
    \end{align}
\end{definition}
Specifically, we are interested in maps that admit holomorphic extensions to so-called  Bernstein polyellipses. 
\begin{definition} {\bf(Bernstein polyellipse)}
    Let $\brho=(\rho_{i})_{i=1}^{p}\in(1,+\infty)^{p}$. We call the set
    \[\polyellipse_{\brho}:=\polyellipse_{\rho_{1}}\times\dots\times\polyellipse_{\rho_{p}}\subset\mathbb{C}^{p}\]
    \review{a} \emph{Bernstein polyellipse} of parameter $\brho$, where $\polyellipse_{\rho}:=\{\review{\frac{z+z^{-1}}{2}}:z\in\mathbb{C},\;1\le|z|\le\rho\}.$
\end{definition}
This setting is justified by the fact that several families of parametric models based on differential equations have parameter-to-solution maps admitting holomorphic extensions to Bernstein polyellipses. These include parametric diffusion problems, parametric parabolic problems, PDEs over parametrized domains, and parametric initial-value problems. For further discussion, we refer to, e.g., \cite[Chapter 4]{adcock2022sparse} and \cite{cohen2015approximation}. 
\\\\
If a map $f$ admits a holomorphic extension to a polyellipse $\mathcal{E}_{\brho}$, the parameter $\brho$ acts as a measure of its \textit{anisotropy}, i.e., it gauges the smoothness of $f$ with respect to each input variable, and it may or may not be known \emph{a priori}. Here, we focus on the more realistic case of unknown or \emph{hidden} anisotropy (cf.\ \cite[Definition 4]{adcock2022deep}).

\begin{definition} {\bf(Hidden anisotropy)}
    \label{def:hidden}
    Let $\hilbert$ be a Hilbert space, $\paramdomain = [-1,1]^p \subset\mathbb{R}^{p}$, $\gamma>0$ and $\epsilon>0$. 
    We write $\ha_{\gamma,\epsilon}(\paramdomain;\hilbert)$ for the set of Hilbert-valued maps $f:\paramdomain\to \hilbert$ which admit a holomorphic extension to a Bernstein polyellipse $\polyellipse_{\brho}\supset\paramdomain$ 
    whose parameter $\brho=(\rho_{j})_{j=1}^{d}$ satisfies   
    \begin{equation}   
    \label{eq:condition_rho}
    p!\prod_{j=1}^{p}\log(\rho_{j})\ge\gamma^{p}(p+1)^{p}(1+\epsilon)^{-1}.
    \end{equation}
\end{definition}
Although the definition of hidden anisotropy might seem obscure or somewhat arbitrary, there is a clear rationale for it. If a map $f:\paramdomain = [-1,1]^p\to \hilbert$ admits a holomorphic extension to a Bernstein polyellipse $\mathcal{E}_{\brho}$, then its \emph{best $\sterm$-term approximation} $f_\sterm$ with respect to Legendre orthogonal polynomials on $L^2(\paramdomain;\hilbert)$ satisfies the following exponential decay rate for any $\epsilon > 0$ (see, e.g., \cite[Theorem 3.15]{adcock2022sparse}):
\[
\|f-f_\sterm\|_{L^2(\paramdomain;\hilbert)}
\leq \exp\left(- C_{\epsilon, p, \brho} \cdot \sterm ^{1/p}\right),
\]
for $\sterm$ large enough (more precisely, for $\sterm \geq \bar{\sterm}$ where $\bar{\sterm}=\bar{\sterm}(\epsilon, p, \brho)$) and where \[
C_{\epsilon, p, \brho} = \frac{1}{p+1}\left(\frac{p! \prod_{j=1}^p \log(\rho_j)}{1+\epsilon}\right)^{1/p}.
\]
Condition \eqref{eq:condition_rho} of Definition~\ref{def:hidden} simply ensures a uniform control of the constant $C_{\epsilon, p, \brho}$ via the inequality \review{$C_{\epsilon, p, \brho} \geq \gamma$}. In other words, all functions $f \in \ha_{\gamma,\epsilon}(\paramdomain;\hilbert)$ satisfy the same best $\sterm$-term exponential decay rate $\|f-f_\sterm\|_{L^2(\paramdomain;\hilbert)}
\leq \exp\left(- \gamma \cdot \sterm ^{1/p}\right)$,  for $\sterm$ large enough. For further details we refer to \cite[Section~3]{adcock2022deep} and references therein.

Since our main focus will be on solution operators to parametrized PDEs, we find it convenient to introduce a short-cut notation for maps taking values in Sobolev spaces. We report it below.

\begin{definition} {\bf(Hidden anisotropy for Sobolev-valued maps)}
    \label{def:hidden-sobolev}
    For $\Omega=(0,1)$, $\paramdomain = [-1,1]^p \subset\mathbb{R}^{p}$, $\gamma>0$, $\epsilon>0$ and $s\in\mathbb{N}$, we set 
    \[\ha_{\gamma,\epsilon,s}(\paramdomain)=\ha_{\gamma,\epsilon}(\paramdomain;H^{s}(\Omega)).\] 
\end{definition}

In practice, Definition~\ref{def:hidden-sobolev} simply sets $\hilbert:=H^{s}(\Omega)$. Here, following usual conventions, we equip $H^{s}(\Omega)$ with the energy norm
\[\|u\|_{H^{s}(\Omega)}:=\sqrt{\int_{0}^{1}|u|^{2}dx+\sum_{k=1}^{s}\int_{0}^{1}\left|\frac{d^{k}u}{dx^{k}}\right|^{2}dx}.\]
We point out that, while Definition~\ref{def:hidden-sobolev} allows for $s=0$, our attention will be devoted to smoother scenarios, namely $s\ge1$.

\subsection{Background on neural networks}
\label{sec:nn}

We now 
recall the mathematical definition of some of the most classical neural network architectures. We start with (feedforward) Deep Neural Networks (DNNs) implementing ``standard'' layers. Hereon, we adopt the usual convention according to which scalar functions are allowed to operate on vectors by acting componentwise on their entries. That is, given $\dnnactivation:\mathbb{R}\to\mathbb{R}$, we let
\[\dnnactivation([\mathrm{v}_{1},\dots,\mathrm{v}_{l}]):=[\dnnactivation(\mathrm{v}_{1}),\dots,\dnnactivation(\mathrm{v}_{l})].\]

\begin{definition}{\bf(Standard layer)}
    \label{def:layer}
    Let $n,m$ be positive integers. A \textit{standard layer} with activation function $\dnnactivation:\mathbb{R}\to\mathbb{R}$ is a map $L:\mathbb{R}^{m}\to\mathbb{R}^{n}$ of the form
    \[L(\mathbf{v})=\dnnactivation\left(\weight\mathbf{v}+\bias\right),\]
    where $\weight\in\mathbb{R}^{m\times n}$ and $\bias\in\mathbb{R}^{n}$ are the layer parameters, referred to as the \textit{weight matrix} and the \textit{bias vector}, respectively. The layer is said to be \textit{affine} if $\dnnactivation$ is the identity map.
\end{definition}

A classical choice for the activation function  $\dnnactivation$ is the Rectified Linear Unit (ReLU). Given a scalar input $a\in\mathbb{R}$, the latter acts as
\[\dnnactivation(a):=\max\{0,a\}.\]
Architectures based on ReLU activations are very popular, as they reproduce the same expressivity of free-knot splines \cite{daubechies2022nonlinear}. As specified below, ReLU networks are just compositions of multiple layers with ReLU activation.

\begin{definition}{\bf(ReLU network)}
\label{def:dnn}
Let $m,n$ be positive integers. We say that a map $\Phi:\mathbb{R}^{m}\to\mathbb{R}^{n}$ is a \textit{ReLU network} if it can be written as
\[\Phi=L_{\ell+1}\circ L_{\ell}\circ\dots\circ L_{1}\]
for some $\ell\ge0$ and some $L_{1},\dots,L_{\ell+1}$, where $L_{i}:\mathbb{R}^{n_{i-1}}\to\mathbb{R}^{n_{i}}$ are standard layers with ReLU activation for $i=1, \ldots, \ell$, $n_{0}:=m$, and $L_{\ell+1}:\mathbb{R}^{n_\ell}\to\mathbb{R}^{n}$ is an affine layer. The layers $L_{1},\dots, L_{\ell}$ are called \textit{hidden layers}, whereas $L_{\ell+1}$ is referred to as the \textit{output layer}. 
\end{definition}
In general, a tuple of layers $(L_{\ell+1},L_{\ell},\dots,L_{1})$ naturally defines a composite architecture of depth $\ell$ and size
\[\text{size}:=\sum_{j=1}^{\ell+1}\left(\|\weight_{j}\|_{0}+\|\bias_{j}\|_{0}\right),\]
where $\weight_{j}$ and $\bias_{j}$ are the weight matrix and the bias vector of $L_{j}$, while $\|\mathbf{A}\|_{0}$ denotes the number of nonzero entries in the tensor $\mathbf{A}.$
\\\\
We note that, in principle, a ReLU network $\Phi:\mathbb{R}^{m}\to\mathbb{R}^{n}$ may admit multiple representations, possibly referring to layer tuples of different depth and size. For instance, the map $\Phi:\mathbb{R}^{1}\to\mathbb{R}^{1}$ defined as $\Phi(a):=\dnnactivation(a)$ can be equivalently re-written as $\Phi(a)=\dnnactivation(\dnnactivation(a))$. The first representation has depth 1 and size 2, whereas the second one has depth 2 and size 3. This ambiguity comes from the fact that \textit{depth} and \textit{size} are properties of layer tuples, rather than properties of their composition. These considerations bring us to the following.

\begin{definition}{\bf(Depth and size)}
    \label{def:depthsize}
    We say that a ReLU network has depth $\le\ell$ and size $\le S$ if it can be realized through a tuple of layers with depth $\le \ell$ and size $\le S$.
\end{definition}

With this clarification, we can now continue our summary by moving to convolutional layers and, thus, Convolutional Neural Networks (CNNs). Historically, convolutional architectures were first introduced to handle time series and RGB images \cite{lecun1995convolutional}, which were commonly stored in data structures orgnanized into  \textit{channels}. For instance, in the case of 1D convolutions, CNNs are designed to accept inputs of dimension $\mathbb{R}^{m\times n}$ and return outputs of dimension $\mathbb{R}^{m'\times n'}$. Thus, they can only be connected to standard DNNs up to introducing suitable \textit{reshape} operations.

Compared to standard architectures, CNNs are more effective in handling high dimensional data, as, by leveraging their spatial structure, they can carry out complex computations with few degrees of freedom. Indeed, it can be shown that a convolutional layer operating from $\mathbb{R}^{m\times n}$ to $\mathbb{R}^{m'\times n'}$ is formally equivalent to a standard layer $\mathbb{R}^{mn}\to\mathbb{R}^{m'n'}$ whose weight matrix is sparse and contains shared entries \cite{petersen2020equivalence} (see Figure~\ref{fig:cnn} for an illustration). 
\begin{figure}
    \centering
    \includegraphics[width=\textwidth]{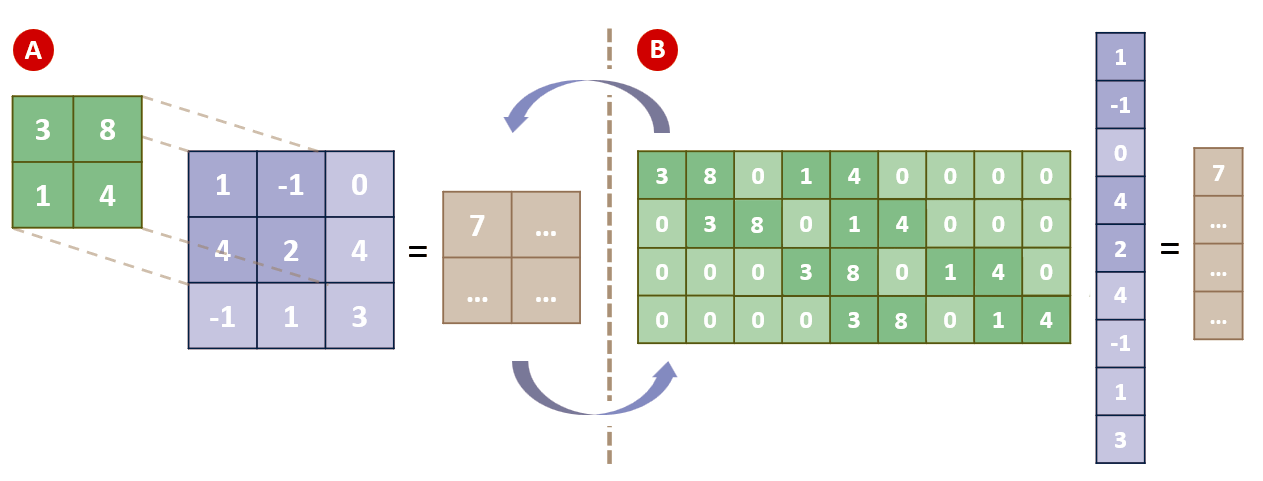}
    \caption{A 2D convolutional layer acting on a given input (simplified setting: 1 channel at input/output, no activation nor bias). The action of the convolutional layer can be visualized either in terms of a moving filter (A) or using the equivalent matrix representation (B): in both cases, despite mapping from $\mathbb{R}^{9}$ onto $\mathbb{R}^{4}$, the layer only comes with 4 learnable parameters, instead of $9 \cdot 4=36$.}
    \label{fig:cnn}
\end{figure}
\\\\
We provide a more rigorous definition of these architectures below. In what follows, we make use of the following notation, which is rather helpful when dealing with tensor objects. Given $\mathbf{A}\in\mathbb{R}^{n_{1}\times\dots\times n_{d}}$, we write $\mathbf{A}_{i_{1},\dots,i_{p}}$ for the $n_{p+1}\times\dots\times n_{d}$ subtensor obtained by fixing the first $p$ dimensions along the specified axis, where $1\le i_{j}\le n_{j}$. 

\begin{definition}{\bf(1D Convolutional layer)}
\label{def:conv}
Let $m,\review{m',s},t,d$ be positive integers and let $g$ be a common divisor of $m$ and $m'$. For any input size $n\in\mathbb{N}$, $n>0$, let
\[n_{\text{out}}:=\left\lfloor\frac{n-d(s-1)-1}{t} + 1\right\rfloor\]A 1D Convolutional layer with $m$ input channels, $m'$ output channels, grouping number $g$, kernel size $s$, stride $t$, dilation factor $d$ and activation function $\dnnactivation:\mathbb{R}\to\mathbb{R}$, is a map of the form
\[L:\mathbb{R}^{m\times n}\to\mathbb{R}^{m'\times n_{\text{out}}}\]
whose action on a given input $\mathbf{V}\in\mathbb{R}^{m\times n}$ is defined as
 \[L(\mathbf{V})_{k'}=\dnnactivation\left(\sum_{k\in \mathcal{K}}\mathbf{W}_{k',k}\otimes_{t,d}\mathbf{V}_{k}+\mathbf{B}_{k'}\right),\]
 where $1\le k'\le m'$, 
 while
 \[\mathcal{K}=\left\{\lfloor g(k'-1)/m\rfloor m/g+1,\dots,\left(\lfloor g(k'-1)/m\rfloor+1\right) m/g\right\}.\]
 Here, $\mathbf{W}\in\mathbb{R}^{m'\times (m/g)\times s}$ and $\mathbf{B}\in\mathbb{R}^{m'\times n_{\text{out}}}$ are the weight tensor and the bias matrix, respectively, whereas $\otimes_{t,d}$ is the cross-correlation operator with stride $t$ and dilation $d$. The latter is defined so that, for any $\mathbf{w}\in\mathbb{R}^{s}$ and $\mathbf{v}\in\mathbb{R}^{n}$, one has $\mathbf{w}\otimes_{t,d}\mathbf{v}\in\mathbb{R}^{n_{\text{out}}}$, where
 \[\left(\mathbf{w}\otimes_{t,d}\mathbf{x}\right)_{j}:=\sum_{i=1}^{s}w_{i}v_{(j-1)t+(i-1)d+1}.\]
\end{definition}
The default values for the stride and the dilation factor are $t=1$ and $d=1$, respectively. For this reason, with a slight abuse of notation, one says that $\Phi$ has no stride and no dilation when $t=d=1$. Similarly, we assume $g=1$ whenever the grouping number is not declared explicitly.

\begin{definition}{\bf(1D transposed convolutional layer)}
\label{def:tconv}
Let $m,\review{m',s},t,d$ be positive integers and let $g$ be a common divisor of $m$ and $m'$. For any input size $n\in\mathbb{N}$, $n>0$, let
\[n_{\text{out}}:=(n-1)t+d(s-1)+1.\]
A 1D transposed convolutional layer with $m$ input channels, $m'$ output channels, grouping number $g$, kernel size $s$, stride $t$, dilation factor $d$ and activation function $\dnnactivation:\mathbb{R}\to\mathbb{R}$, is a map of the form
\[L:\mathbb{R}^{m\times n}\to\mathbb{R}^{m'\times n_{\text{out}}}\]
whose action on a given input $\mathbf{V}\in\mathbb{R}^{m\times n}$ is defined as
\[L(\mathbf{V})_{k'}=\dnnactivation\left(\sum_{k\in\mathcal{K}}\mathbf{W}_{k,k'}\otimes_{t,d}^{\top}\mathbf{V}_{k}+\mathbf{B}_{k'}\right),\]
where $1\le k'\le m'$, while 
\[\mathcal{K}=\left\{\lfloor g(k'-1)/m\rfloor m/g+1,\dots,\left(\lfloor g(k'-1)/m\rfloor+1\right) m/g\right\}.\]
Here, $\mathbf{W}\in\mathbb{R}^{(m/g)\times m'\times s}$ and $\mathbf{B}\in\mathbb{R}^{m'\times n_{\text{out}}}$ are the weight tensor and the bias matrix, respectively, whereas $\otimes_{t,d}^{\top}$ is the transposed cross-correlation operator with stride $t$ and dilation $d$. The latter is defined so that, for any $\mathbf{w}\in\mathbb{R}^{s}$ and $\mathbf{v}\in\mathbb{R}^{n}$, one has $\mathbf{w}\otimes_{t,d}^{\top}\mathbf{v}\in\mathbb{R}^{n_{\text{out}}},$ where
\[\left(\mathbf{w}\otimes_{t,d}^{\top}\mathbf{v}\right)_{j}:=\sum_{i\in\mathcal{I}}w_{\left\lfloor(i-1)t/d+(1-j)/d\right\rfloor+1}v_{i},\]
with $\mathcal{I}=\left\{\left\lfloor\frac{j-1}{t}+1\right\rfloor,\dots,\left\lfloor\frac{(s-1)d+j-1}{t}+1\right\rfloor\right\}.$
\end{definition}

As we mentioned, it is also useful to define reshaping operations. We provide a rigorous definition below.

\begin{definition}{\bf(Reshape)}
\label{def:reshape}
Let $m$ and $n$ be positive integers. Let $R_{m,n}:\mathbb{R}^{mn}\to\mathbb{R}^{m\times n}$ be the bijective linear map defined as
\[R_{m,n}: x\mapsto\left[
\begin{array}{lll}
    x_{1} & \dots &  x_{n}\\
    x_{n+1} & \dots & x_{2n}\\
    \dots & \dots & \dots\\
    x_{(m-1)n+1} & \dots & x_{mn}
\end{array}
\right].\]
The map $R_{m,n}$ and its inverse, $R_{m,n}^{-1}$, are called \emph{reshape} operations.
\end{definition}

\begin{definition}{\bf(Convolutional Neural Network)}
    \label{def:cnn}
    We say that a map $\Psi$ is a \emph{Convolutional Neural Network (CNN)} if it can be realized as the composition of (transposed) convolutional layers and reshape operations.
\end{definition}

It is important to note that, by including reshape operations, CNNs can accept both vectors and matrices. In fact, any CNN $\tilde{\Psi}:\mathbb{R}^{m\times n}\to\mathbb{R}^{m'\times n'}$ comes with its vectorized counterpart $\Psi:=R_{m',n'}^{-1}\circ\tilde{\Psi}\circ R_{m,n}$. Note in fact that, although $\Psi$ operates on vectors, it can be considered a CNN according to Definition~\ref{def:cnn}.
\\\\
The concepts of depth and size can be easily generalized to CNNs in the natural way. We mention that, in doing so, reshape modules are typically ignored. Finally, in what follows, \review{we will say that a CNN has} at most $q$ channels per layer if it can be realized without relying on convolutional layers that have more than $q$ channels (either at input or output). Similarly, when stating that a CNN has depth $\le\ell$, size $\le S$, number of channels per layer $\le q$, \review{and kernel size per layer $\le K$,} we intend that there exists a representation of such architecture satisfying all those requirements simultaneously.

\section{Main result}
\label{sec:main}
We are now ready to present our main result. However, before coming to the actual statement of the Theorem~\ref{theorem:cnn}, it is worth recapping the general context.
Let $p\in\mathbb{N}$, $p\ge1$, $\Omega=(0,1)$ and $\paramdomain=[-1,1]^{p}$. Let $\{x_{1}<x_{2}<\dots<x_{N_{h}}\}\subset\overline{\Omega}$ be a fixed spatial grid, and let $\probability$ be the uniform probability distribution over $\paramdomain$. DL-ROMs aim at approximating the map
\[\paramdomain\ni\mub\mapsto[u_{\mub}(x_{1}),\dots,u_{\mub}(x_{N_{h}})]^\top\in\mathbb{R}^{N_{h}},\]
where $u_{\mub}:=\operator(\mub)$ is the solution to some parametrized PDE, with $\operator:\paramdomain\to H^{s}(\Omega)$ taking values in a suitable Sobolev space, $s\ge1.$

In the DL-ROM paradigm, such approximation is provided by a neural network architecture \[\Phi:\mathbb{R}^{p}\to\mathbb{R}^{N_{h}},\] obtained via the composition of a reduced module, $\phi:\mathbb{R}^{p}\to\mathbb{R}^{m}$, and a decoder module, $\Psi:\mathbb{R}^{m}\to\mathbb{R}^{N_{h}}$. During training, these architectures are supplemented by an auxiliary encoder module, $\Psi':\mathbb{R}^{N_{h}}\to\mathbb{R}^{m}$, which effectively turns the intermediate state space, $\mathbb{R}^{m}$, onto a \textit{latent} space. In fact, the idea is that
\[\Phi(\mub):=\Psi(\phi(\mub))\approx[u_{\mub}(x_{1}),\dots,u_{\mub}(x_{N_{h}})]^\top,\]
while, at the same time, $\phi(\mub)\approx\Psi'([u_{\mub}(x_{1}),\dots,u_{\mub}(x_{N_{h}})]^\top).$
\\\\
Recently, the theory underlying DL-ROMs has evolved significantly, see, e.g. \cite{brivio2023error, franco2023latent, franco2023approximation, franco2023deep}. However, practical insights on the implementation and training of these architectures are far from being exhaustive. For this reason, domain practitioners often rely on suitable rules of thumb, deduced from empirical evidence. Some of these include:

\begin{itemize}
    \item [i)] in practice, althought certain studies suggest otherwise \cite{mishra2021enhancing}, ReLU networks are particularly expressive compared to other architectures that rely on different nonlinearities, such as the sigmoidal activation \cite{oostwal2021hidden}. Thus, the ReLU activation, \review{together with its smooth variants (softplus, SeLU, GeLU, etc.),} can be a good choice when constructing DL-ROMs. \review{Here, for the sake of simplicity, we shall focus on the ``standard'' ReLU};
    \item [ii)] convolutional layers can significantly enhance the performances of the decoder module. In fact, given the high dimensionality of the output, classical layers would be prohibitive to train \cite{lee2020model, mucke2021reduced};
    \item [iii)] the number of convolutional layers in the decoder module should be proportional to the resolution of the spatial grid \cite{franco2023approximation};
    \item [iv)] the encoder block does not need to be as complex as the decoder, nor it benefits as much from the use of convolutional layers \cite{franco2023deep};
    \item [v)] ideally, it should be possible to use the same autoencoder for different problems simultaneously \cite{masoumi-verki2023use, sun2018deep}, without the need for re-training (a practice also known as \emph{transfer learning}). In fact, classical compression techniques based on, e.g., Fourier transform, or wavelets, are somewhat universal. Similarly, we should be able to find problem-agnostic autoencoders that do not require a specific training routine;
    \item [vi)] the reduced map should be trained by taking the encoder outputs as a ground truth reference, i.e., by minimizing
    \[\frac{1}{N}\sum_{i=1}^{N}\|\phi(\mub_{i})-\Psi'([u_{\mub_{i}}(x_{1}),\dots,u_{\mub_{i}}(x_{N_{h}})]^\top)\|_{2}^{2},\]
    where $\{\mub_i\}_{i=1}^{N}\subset\paramdomain$ is an independent identically distributed (i.i.d.) random sample, generated according to $\probability$;
    \item [vii)] to avoid overfitting and ensure a proper generalization, DL-ROMs can benefit from suitable regularization strategies \cite{fresca2021comprehensive}, especially at the latent level \cite{romor2022non-linear};
\end{itemize}

As we shall see in a moment, by embedding convolutional neural networks within the novel framework of \textit{practical existence theorems}, we can finally derive a comprehensive theory supporting these heuristics. We report our main result, Theorem \ref{theorem:cnn}, right below. For the sake of better readability, the proof is postponed to Section \ref{sec:proof}. \review{In what follows, given random variable $X$, we shall write $\mathbb{E}^{1/2}[X]$ as a short-hand notation for $(\mathbb{E}[X])^{1/2}$.}

\begin{theorem} \label{theorem:cnn}
There are universal constants $c_{0},c_{1},c_{2},c_{3},c_{4}>0$ such that the following holds.
Let $p\in\mathbb{N}$, $p\ge1$, and $\epsilon,\gamma>0$. Let $\probability$ be the uniform probability distribution over $\paramdomain:=[-1,1]^{p}$. Let $\Omega=(0,1)$ and \[\operator:\paramdomain\ni\mub\to u_{\mub}\in H^{s}(\Omega)\] be a (nonlinear) map belonging to $\ha_{\gamma,\epsilon,s}(\paramdomain)$, where $s\ge1$ (see Definition~\ref{def:hidden-sobolev}). Fix a training size $N\ge1$ and a probability of failure $0<\varepsilon<1.$ Define
\begin{align*}
    &\tilde{N}:= N\cdot(c_{0}\log(2N)(\log(2N)\min\{\log(2N)+p,\;\log(2N)\log(2p))\}+\log(1/\varepsilon))^{-1}\\
    &\Delta := \min\left\{2^{p/2+1}\tilde{N}^{3/2},\;e^{2}(\tilde{N}/2^{p})^{1+\frac{1}{2}\log_{2}p},\;\frac{\tilde{N}^{1/2}(\log\tilde{N} + (p+1)\log2)^{p-1}}{2^{p/2-1}(p-1)!}\right\}.
\end{align*} Let $\{x_{j}\}_{j=1}^{N_{h}}\subset\overline{\Omega}$ be an equispaced grid of stepsize $h=2^{-k}$ for some $k\in\mathbb{N}$. 
Fix a latent dimension $m\ge1$ and let $\tilde{m}:=2m+1$. Then, there exist
\begin{itemize}
    \item [a)] a class of ReLU networks $\dense$ from $\mathbb{R}^{p}\to\mathbb{R}^{\tilde{m}}$ with
    \begin{align*}
         \;\;\;\;\;\;&\emph{depth}\;\le\;c_{1}(1+p\log p)(1+\log\tilde{N})\left((\tilde{N}/2^{p})^{1/2}+\log(\Delta)+\gamma\tilde{N}^{1/(2p)}\right),
         \\ \;\;\;\;\;\;&\emph{size}\;\le\;c_{2}p\left(p\tilde{N}/2^{p}+\left((\tilde{N}/2^{p})^{1/2}+p\Delta\right)\left(\log(\tilde{N}\Delta)+\gamma\tilde{N}^{1/(2p)}\right)\right)+\tilde{m}\Delta,
    \end{align*}
    \item [b)] a latent regularization function $\regularizer:\dense\to[0,+\infty)$, equivalent to a certain norm of the trainable parameters, and a regularization parameter $\lambda=\lambda(\tilde{N},p)$,\vspace{0.5em}
    \item [\review{c)}] a ReLU convolutional neural network, $\Psi:\mathbb{R}^{\tilde{m}}\to\mathbb{R}^{N_{h}}$, whose architecture only depends on 
    \review{$\operator$ through $s$,}
    with
         \[\emph{depth}\;\le c_{3}\log(1/h),
         \quad\quad\emph{size}\;\le c_{3}m\log(1/h),\]
         \[\emph{channels per layer}\;\le8m,\review{
         \quad\quad\emph{kernel size per layer}\;\le2,}\]
    acting as a decoder,\vspace{0.5em}
    \item [\review{d)}] a ReLU network, $\Psi':\mathbb{R}^{N_{h}}\to\mathbb{R}^{\tilde{m}}$, whose architecture only depends on 
    \review{
    $\operator$ through $s$ and $\|\operator\|_{L^{\infty}(\polyellipse_{\brho}, H^{s}(\Omega))}$,}
    operating as an encoder,\vspace{0.5em}
\end{itemize}
such that the following holds with probability $1-\varepsilon$. Let $\{\mub_{i}\}_{i=1}^{N}$ be an i.i.d.\ random sample, uniformly drawn from the parameter space $\paramdomain$. Denote by $P$ the function-to-grid operator, $P:u\mapsto[u(x_{1}),\dots,u(x_{N_{h}})]$. Every minimizer $\hat{\phi}\in\dense$ of
\begin{equation}
    \label{eq:minim}
    \min_{\phi\in\dense}\;\sqrt{\frac{1}{N}\sum_{i=1}^{N}\|\phi(\mub_{i})-\Psi'(Pu_{\mub_{i}})\|_{2}^{2}}+\lambda\regularizer(\phi)
\end{equation}
satisfies
\begin{multline}
    \label{eq:mainresult}
    \expe_{\mub\sim\probability}^{1/2}\left[\;\sup_{j=1,\dots,N_{h}}|u_{\mub}(x_{j})-\Psi_{j}(\hat{\phi}(\mub))|^{2}\right]\le\\\le c_{4}\left(\sqrt{m}e^{-\frac{1}{\sqrt{2}}\gamma\tilde{N}^{1/(2p)}}+\sqrt{\frac{2m^{1-2s}}{2s-1}}\right)\|\operator\|_{L^{\infty}(\polyellipse_{\brho}, H^{s}(\Omega))},
\end{multline}
for all $\tilde{N}\ge N_{0}$, where $N_{0}=N_{0}(\epsilon,\operator,m,p,s)$ and $\polyellipse_{\brho}$ is as in Definition~\ref{def:hidden}.
\end{theorem}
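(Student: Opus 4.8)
The plan is to decompose the target error into three pieces — a \emph{spatial approximation} error coming from the autoencoder $\Psi\circ\Psi'$, a \emph{latent learning} error coming from the reduced network $\hat\phi$, and a \emph{discretization/sampling} error linking the continuous $L^2(\paramdomain;H^s(\Omega))$ bound to the grid-wise supremum on the left-hand side — and then estimate each piece with the tools already introduced. First, I would build the autoencoder. Since $\operator\in\ha_{\gamma,\epsilon,s}(\paramdomain)$ takes values in $H^s(\Omega)$ with $\Omega=(0,1)$, every snapshot $u_{\mub}$ lives in a space whose best $m$-term approximation (in an appropriate univariate basis, e.g.\ a hierarchical spline or wavelet-type basis adapted to the dyadic grid $h=2^{-k}$) converges at the Sobolev rate: this produces the $\sqrt{2m^{1-2s}/(2s-1)}$ term. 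The encoder $\Psi'$ is the (bounded, affine) projection onto the coefficients of this $m$-term expansion, reshaped into $\mathbb{R}^{\tilde m}$ with $\tilde m = 2m+1$ (the extra slots absorbing the constant and the packaging needed downstream); its architecture depends on $\operator$ only through $s$ and the magnitude $\|\operator\|_{L^\infty(\polyellipse_{\brho},H^s)}$ used to calibrate the domain of the ReLU reconstruction. The decoder $\Psi$ must synthesize the grid values $[u(x_1),\dots,u(x_{N_h})]$ from $m$ coefficients; here I would invoke the CNN-expressivity results of \cite{franco2023approximation} (see rule iii)) to realize the multiscale synthesis operator as a ReLU \emph{transposed}-convolutional network of depth $O(\log(1/h))$, size $O(m\log(1/h))$, with $\le 8m$ channels and kernel size $\le 2$ per layer — the dyadic structure of the grid is exactly what makes a fixed kernel-size-$2$ stride-$2$ upsampling cascade do the job, and crucially $\Psi$ depends on $\operator$ only through $s$.

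Next comes the latent-learning step, which is where the machinery of practical existence theorems from \cite{adcock2022deep,adcock2022learning,adcock2022sparse} enters. The composition $\mub\mapsto \Psi'(P u_{\mub})$ (or rather its infinite-precision analogue $\mub\mapsto \Psi'_{\mathrm{exact}}(u_{\mub})$) inherits holomorphy from $\operator$ — $\Psi'$ being affine — so it is a $\mathbb{C}^{\tilde m}$-valued map in the same hidden-anisotropy class, hence admits a best $n$-term Legendre approximation decaying like $\exp(-\gamma n^{1/p})$. I would then apply the existing Hilbert/Banach-valued compressed-sensing recovery guarantee: with $N$ i.i.d.\ uniform samples, solving the $\ell^1$-regularized least-squares problem \eqref{eq:minim} over the ReLU class $\dense$ that emulates the relevant polynomial space yields, with probability $1-\varepsilon$, an $L^2(\paramdomain)$-error of order $\sqrt{\tilde m}\,\exp(-\tfrac{1}{\sqrt2}\gamma\tilde N^{1/(2p)})$, where $\tilde N$ is the log-corrected sample size and the $\tfrac1{\sqrt2}$ and $\tilde N^{1/(2p)}$ exponents come from the standard trade-off between the sparsity level and the RIP-type sample complexity; the regularization functional $\regularizer$ is the ($\ell^1$-type) norm of the trainable coefficients and $\lambda=\lambda(\tilde N,p)$ is the usual choice $\sim 1/\sqrt{\tilde N}$ (up to log factors). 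The depth/size bounds in a) are then just the cost of ReLU-emulating multivariate Legendre polynomials up to the required degree and accuracy, assembled from the emulation lemmas deferred to Appendices \ref{appendix:hermite} and \ref{appendix:relu}; the $\Delta$ quantity is precisely the cardinality bound on the hyperbolic-cross-type index set entering that emulation.

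Finally, I would glue the pieces. Writing $\|u_{\mub}(x_j)-\Psi_j(\hat\phi(\mub))\|$ $\le$ $\|u_{\mub}(x_j)-\Psi_j(\Psi'(Pu_{\mub}))\|$ $+$ $\|\Psi_j(\Psi'(Pu_{\mub}))-\Psi_j(\hat\phi(\mub))\|$, the first term is controlled grid-uniformly by the autoencoder construction (this is where the uniform-in-$\x$ bound, rather than a space-averaged one, is bought — by designing $\Psi$ so that its output coordinates are controlled in $\ell^\infty$, using a Sobolev embedding $H^s(0,1)\hookrightarrow C^0$ for $s\ge1$), and the second is controlled by the Lipschitz constant of $\Psi$ (again uniformly over coordinates, thanks to the sparse shared-weight structure) times the latent error $\|\Psi'(Pu_{\mub})-\hat\phi(\mub)\|_2$, whose $L^2(\paramdomain)$ norm was bounded in the previous paragraph. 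Taking $\expe^{1/2}_{\mub\sim\probability}[\sup_j(\cdot)^2]$, using the triangle inequality in $L^2(\paramdomain)$, and collecting constants into $c_4$ gives \eqref{eq:mainresult}; the threshold $\tilde N\ge N_0$ is exactly the ``$\sterm$ large enough'' requirement in the best-$\sterm$-term decay estimate quoted after Definition~\ref{def:hidden}. \textbf{The main obstacle} I anticipate is bookkeeping the uniform-in-$\x$ (rather than $L^2$-in-$\x$) control through the decoder: one must track the $\ell^\infty\to\ell^\infty$ operator norm of the transposed-convolutional synthesis network and show it does not blow up with $N_h$, which forces the CNN construction to use a \emph{stable} (e.g.\ $L^\infty$-bounded, multiresolution) basis rather than an arbitrary one, and to verify that the ReLU emulation of that basis preserves the stability up to universal constants — this is the delicate interface between the approximation-theoretic and the learning-theoretic halves of the argument, and is presumably why the decoder's kernel size and channel count are pinned down so explicitly in part c).
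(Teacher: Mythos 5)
Your overall skeleton matches the paper's: build the autoencoder from the CNN approximation theory of \cite{franco2023approximation} (which is where the $\sqrt{2m^{1-2s}/(2s-1)}$ term and the explicit depth/size/channel/kernel bounds in c) come from), invoke the practical existence theorem of \cite{adcock2022deep} for the latent map, and glue via a triangle inequality using the coordinatewise Lipschitz bound $|\Psi_j(\mathbf{a})-\Psi_j(\mathbf{b})|\le\|\mathbf{a}-\mathbf{b}\|_1\le\sqrt{\tilde m}\,\|\mathbf{a}-\mathbf{b}\|_2$ (this last inequality, from \cite{franco2023approximation}, is exactly what resolves the ``main obstacle'' you flag at the end; the $\sqrt{\tilde m}$ it costs is the $\sqrt{m}$ factor in \eqref{eq:mainresult}).

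There is, however, a genuine gap in your encoder construction, and it propagates into how you apply the learning theorem. You describe $\Psi'$ as ``the (bounded, affine) projection onto the coefficients of this $m$-term expansion,'' and later use affinity of $\Psi'$ to claim that $\mub\mapsto\Psi'(Pu_{\mub})$ is holomorphic so that the compressed-sensing recovery result applies to it directly. But $\Psi'$ acts on the grid vector $Pu_{\mub}\in\mathbb{R}^{N_h}$, not on $u_{\mub}$ itself, and the expansion coefficients of $u$ (Fourier, wavelet, or otherwise) are not affine functions of finitely many point values. Worse, without first arranging that $P$ is injective on $\operator(\paramdomain)$, \emph{no} encoder of any kind can consistently recover the latent code from grid values: two solutions could share the same grid samples yet have different codes. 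The paper spends Step 1 reducing to the injective case (via the De Boor--Lynch minimal-norm spline interpolation operator $Q$, which preserves both the grid values and the hidden-anisotropy class), then in Step 3 builds $\Psi'$ only as a universal-approximation ReLU surrogate of $T\circ P^{-1}$ on the compact set $P(\operator(\paramdomain))$, accurate to an arbitrary tolerance $\delta$. Consequently the holomorphic object fed to \cite[Theorem 5]{adcock2022deep} is not $\mub\mapsto\Psi'(Pu_{\mub})$ but the exact latent map $\mub\mapsto Tu_{\mub}$ (holomorphic because $T$ is linear), with $\Psi'(Pu_{\mub_i})$ playing the role of \emph{noisy} samples at noise level $\delta$; one also needs a rescaling $\eta_*=(4\|\operator\|_{L^{\infty}(\polyellipse_{\brho},H^s(\Omega))})^{-1}$ to meet that theorem's normalization, and a final choice of $\delta$ small enough that the noise term is absorbed into the $m^{1/2-s}$ term. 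These steps — injectivity reduction, encoder-as-approximate-inverse, noise bookkeeping, and rescaling — are missing from your proposal and cannot be bypassed by declaring the encoder affine.
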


Theorem~\ref{theorem:cnn} has multiple implications. First of all, it shows clearly how the different properties of the problem affect the design of the neural network architectures. For instance, the complexity of the decoder, both in terms of depth and size, scales logarithmically with the grid resolution, $h$. In contrast, the reduced network, $\phi$, does not depend on $h$, but on $p$. Notably, thanks to the regularity of the parameter-to-solution map, the reduced network is 
only mildly affected by the curse of dimensionality: its size grows at most quadratically in $p$ \review{(up to logarithmic factors)}. 

Another interesting fact concerns the latent dimension, $m$, which directly appears in the error bound \eqref{eq:mainresult}. On the hand, increasing $m$ can improve the accuracy of the model (at a rate that depends on the smoothness of the PDE solutions, $s$). However, in order to generalize properly, DL-ROMs with a larger latent space necessitate of more training data, as clearly depicted by the term $\sqrt{m}\exp(-\tilde{N}^{1/(2p)}/\sqrt{2})$. \review{More specifically, in order to achieve a prescribed target accuracy level $\tau >0$ it is sufficient for $m$ to scale polynomially in $1/\tau$ and for $\tilde{N}$ to scale polynomially in $\log(1/\tau)$ (this can be seen by bounding the two main terms in the right-hand side of \eqref{eq:mainresult} from above with $\tau$ and rearranging the corresponding inequalities).} 

In general, Theorem~\ref{theorem:cnn} shows that the error of a trained DL-ROM can be bounded by two terms: a \textit{sampling error}, which---asymptotically---decays exponentially with respect to the training set size, and an \textit{approximation error}, driven by the architecture design and the output smoothness. Interestingly, our result confirms most of the heuristics adopted by domain practitioners: from the use of ReLU networks and convolutional autoencoders, to the introduction of latent regularization techniques. On this note, we also observe that Theorem~\ref{theorem:cnn} implicitly supports the use of \textit{transfer learning}. In fact, \review{looking back at the proof}, our result suggests that, for a fixed degree of smoothness, there exists a universal autoencoder performing equivalently well for all operators $\operator$ (up to a norm factor). In practice, such autoencoder could be initialized and trained \emph{a priori}, by relying on synthetic data. 

\review{The regularizer $\mathcal{R}$ is an important component of Theorem~\ref{theorem:cnn}. Recalling the notation introduced in Definition~\ref{def:dnn}, the latter can be explicitly characterized using the matrix $\ell^{2,1}$-norm as 
$$
\mathcal{R}(\phi) 
= \|\mathbf{W}_{\ell+1}\|_{2,1} 
:= \sum_{j = 1}^{n_{\ell}} \|\mathbf{W}_{\ell+1} \mathbf{e}_j \|_{2},
$$
where $\mathbf{W}_{\ell+1} \in \mathbb{R}^{\tilde{m} \times n_{\ell}}$ is the weight matrix associated with the last (linear) layer of $\phi$, while $\mathbf{e}_{j}\in\mathbb{R}^{n_{l}}$ is the $j$th vector of the canonical basis. The presence of this regularization term is essential to prove the practical existence theorem in \cite{adcock2022deep} (upon which Theorem~\ref{theorem:cnn} relies). In fact, it allows one to rigorously connect deep neural network training with sparse polynomial approximation via compressed sensing. For a more detailed discussion, we refer the reader to \cite{adcock2024learning}.}

\review{
In relation to this, an inspection of the proof of Theorem~\ref{theorem:cnn} reveals that 
$\hat{\phi}$ consists mostly of sparsely connected layers. 
%
This sparsity is further promoted by the regularizer $\mathcal{R}$, whose action naturally favors \textit{compressibility} (i.e., approximate sparsity) 
of the last layer's weights; keeping only the absolute largest weights of this layer would yield improved network size bounds (see \cite[Section~9.4]{adcock2024learning} for further details). These observations suggests that adopting \emph{network pruning} in this context might be an effective strategy (see \cite{frankle2019lottery}); notably, this is coherent with recent empirical evidence in the reduced order modeling literature, see, e.g., \cite{franco2023mesh-informed, hernandez2021deep}.} 

\review{Clearly, despite offering several insights, Theorem~\ref{theorem:cnn} comes with its own limitations: we provide a detailed discussion on the matter in Section~\ref{sec:conclusions}.}

\subsection{Application to a parametric diffusion model}
\label{sec:param_diff}
To showcase the applicability of Theorem~\ref{theorem:cnn}, we consider a parametric diffusion equation with affine parametric dependence on the diffusion term. This model is often used as a case study in the parametric PDE literature (see, e.g.,  \cite[Chapter 4]{adcock2022sparse}, \cite{cohen2015approximation, marcati2023exponential}, and references therein). 

We consider the physical domain $\Omega = (0,1)$, a forcing term $F \in H^{-1}(\Omega)$, and functions $a_0 \in L^\infty(\Omega)$, $\{\psi_j\}_{j=1}^p \subset L^\infty(\Omega)$ defining an affine parametric diffusion term
\begin{equation}
    a_{\mub}(x) = a_0(x) + \sum_{j=1}^p \mu_j \psi_j(x), \quad x\in\Omega, \; \mub \in \paramdomain.
\end{equation}
Then, we consider the following parametric weak problem: for any $\mub\in \paramdomain$, find $u_{\mub}\in H_0^1(\Omega)$ such that
\begin{equation}
\label{eq:weak_diffusion}
\int_{\Omega} a_{\mub} \frac{d u_{\mub}}{dx} \frac{dv}{dx} \, dx = \int_\Omega Fv \, dx, 
\quad \forall v \in H_0^1(\Omega).
\end{equation}
In order to apply Theorem~\ref{theorem:cnn}, we need to (i) find sufficient conditions 
ensuring that the map $\operator:\mub \mapsto u_{\mub}$ belongs to $\mathcal{HA}_{\gamma, \epsilon, 1}(\paramdomain)$ and (ii) estimate 
$\|\operator\|_{L^\infty(\polyellipse_{\brho}, H^1(\Omega))}$. 

First, we assume the parametric problem \eqref{eq:weak_diffusion} to be \emph{uniformly elliptic}, i.e., such that
\begin{equation}
\label{eq:unif_ellip}
\sum_{k=1}^p |\psi_k(x)| \leq a_0(x) - r,
\end{equation}
for some $r>0$. This implies, in particular, that $\essinf_{x\in\Omega} a_{\mub}(x) \geq r$ for every $\mub\in \Omega$ (and, hence, that \eqref{eq:weak_diffusion} is elliptic for every fixed $\mub \in \paramdomain$). In addition, for some fixed 
$\gamma,\epsilon > 0$ and $\xi >0$ we assume the functions $\{\psi_k\}_{k=1}^p$ to be such that
\begin{equation}
\label{eq:assumption_Bernstein_diffusion}
\sum_{k=1}^p \left(\frac{\rho_k + \rho_k^{-1}}{2}-1\right)\|\psi_k\|_{L^{\infty}} \leq \xi,
\end{equation}
for every $\brho$ satisfying condition \eqref{eq:condition_rho}.
In this setting, \cite[Proposition 4.9]{adcock2022sparse} immediately implies that $\operator \in \mathcal{HA}_{\gamma,\epsilon,1}$. Note that a holomorphic extension of $\operator$ to $\polyellipse_{\brho}$ is the map $\zetab \mapsto u_{\zetab}$, where $u_{\zetab}$ is the (thanks  to uniform ellipticity, unique) solution to the weak problem associated with the complex-valued diffusion coefficient 
\[
a_{\zetab}(x) = a_0(x) + \sum_{j=1}^p \zeta_j \psi_j(x), \quad x\in\Omega, \; \zetab \in \polyellipse_{\brho}.
\]
In addition, we see that
\[
\|\operator\|_{L^{\infty}(\polyellipse_{\brho}, H^1(\Omega))}
\leq \sqrt{1+\frac{1}{\pi^2}} \cdot
\|\operator\|_{L^{\infty}(\polyellipse_{\brho}, H^1_0(\Omega))} 
\leq \sqrt{1+\frac{1}{\pi^2}} \cdot \frac{\|F\|_{H^{-1}(\Omega)}}{r-\xi},
\]
where the first inequality hinges on the Poincar\'e inequality, while the second one is a consequence of \cite[Proposition 4.9]{adcock2022sparse}. Here, $H_0^1(\Omega)$ is equipped with its classical energy (semi)norm \[\|u\|_{H^1_0}:=\sqrt{\int_{\Omega}\left|\frac{d u}{dx}\right|^{2}dx}.\]
We are then allowed to apply Theorem~\ref{theorem:cnn} to problem \eqref{eq:weak_diffusion}, in which case the error bound \eqref{eq:mainresult} reads
\begin{multline*}
\expe_{\mub\sim\probability}^{1/2}\left[\;\sup_{j=1,\dots,N_{h}}|u_{\mub}(x_{j})-\Psi_{j}(\hat{\phi}(\mub))|^{2}\right]\le\\\le c \left(\sqrt{m}e^{-\frac{1}{\sqrt{2}}\gamma\tilde{N}^{1/(2p)}}+\sqrt{\frac{2}{m}}\right)\frac{\|F\|_{H^{-1}(\Omega)}}{r-\xi},
\end{multline*}
for some universal constant $c>0$.
\\\\
We conclude by noting that Theorem~\ref{theorem:cnn} could also be applied to problem \eqref{eq:weak_diffusion} for $s>1$. Indeed, if $F \in H^{s-2}(\Omega)$ and $a_{\mub} \in C^{s-1}(\Omega)$, then standard regularity theory results for PDEs imply that $u_{\mub}\in H^s(\Omega)$ (see, e.g., \cite[Section 6.3, Theorem~2]{evans2022partial}). However, finding precise sufficient conditions on the parametric coefficient $a_{\mub}$ able to ensure that $\operator \in \mathcal{HA}_{\gamma,\epsilon,s}$ and bounding $\|\operator\|_{L^{\infty}(\polyellipse_{\brho}, H^s(\Omega))}$ requires an extension of \cite[Proposition 4.9]{adcock2022sparse} and a careful analysis that is outside the scope of this paper.




\section{Proof of Theorem~\ref{theorem:cnn}}
\label{sec:proof}
We subdivide the proof into several steps. In particular, we shall state, and prove, a few claims that eventually lead to the full proof. Before diving into the details, we recall the definition of \textit{operator norm}. Given a linear map $T:(\mathscr{A},\|\cdot\|_{\mathscr{A}})\to(\mathscr{B},\|\cdot\|_{\mathscr{B}})$ between two normed spaces, we set
\[\opnorm{T}:=\sup_{\substack{a\in \mathscr{A}\\\|a\|_{\mathscr{A}=1}}}\|Ta\|_{\mathscr{B}}.\]
Equivalently, due to linearity, $\opnorm{T}$ is nothing but the (best) Lipschitz constant of $T$.
\\\\
\textbf{Step 1.} \textit{Without loss of generality, the function-to-grid operator, $P$, can be assumed to be injective over $\operator(\paramdomain)\subset H^{s}(\Omega).$}
\begin{proof}
    Assume that we are able to prove Theorem~\ref{theorem:cnn} whenever $P$ is injective over $\operator(\paramdomain)\subset H^{s}(\Omega)$.
    Let us now consider an operator $\tilde{\operator}$, satisfying all the hypotheses of the Theorem, but for which $P$ is not injective over the image set $\tilde{\operator}(\paramdomain).$ Then, the idea is to exploit the following Lemma, which, essentially, is just a re-writing of \cite[Theorem 5.1]{de1966splines}.
    \begin{lemma}
        \label{lemma:splines}
        Let $\Omega$, $s$ and $x_{1},\dots,x_{N_{h}}$, be as in Theorem~\ref{theorem:cnn}. There exists a bounded linear operator $Q: H^{s}(\Omega)\to H^{s}(\Omega)$ such that
        \begin{itemize}
            \item[i)] $(Qf)(x_{j})=f(x_{j})$ for $f\in H^{s}(\Omega)$ and all $j=1,\dots, N_{h};$\vspace{0.25cm}
            \item[ii)] if $f,g\in H^{s}(\Omega)$ and $f(x_{j})=g(x_{j})$ for all $j=1,\dots, N_{h}$, then \[\|Qf\|_{H^{s}(\Omega)}\le \|g\|_{H^{s}(\Omega)}.\]
        \end{itemize}
    \end{lemma}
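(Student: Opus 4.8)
The plan is to construct $Q$ explicitly as an orthogonal projection in the Hilbert space $H^{s}(\Omega)$, thereby bypassing the spline machinery altogether (although the connection to natural interpolating splines of degree $2s-1$, as in \cite{de1966splines}, is exactly what makes the two viewpoints equivalent). The starting point is the observation that, since $\Omega=(0,1)$ and $s\ge1$, the Sobolev embedding $H^{s}(\Omega)\hookrightarrow C(\overline{\Omega})$ holds, so every point evaluation $\delta_{x_{j}}:f\mapsto f(x_{j})$ is a bounded linear functional on $H^{s}(\Omega)$. Hence the subspace $V:=\{g\in H^{s}(\Omega):g(x_{j})=0\;\text{for all }j=1,\dots,N_{h}\}=\bigcap_{j}\ker\delta_{x_{j}}$ is closed, and $H^{s}(\Omega)=V\oplus V^{\perp}$ with $V^{\perp}$ finite-dimensional (of dimension at most $N_{h}$), the orthogonality being taken with respect to the inner product inducing the energy norm $\|\cdot\|_{H^{s}(\Omega)}$. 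I would then simply set $Q:=P_{V^{\perp}}$, the orthogonal projection onto $V^{\perp}$.

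With this definition, linearity and boundedness (in fact $\opnorm{Q}\le1$) are immediate properties of orthogonal projections, so $Q$ maps $H^{s}(\Omega)$ into itself as required. For property (i), I would write $Qf=f-P_{V}f$ and note that $P_{V}f\in V$, so $(P_{V}f)(x_{j})=0$ for all $j$ and therefore $(Qf)(x_{j})=f(x_{j})$. For property (ii), the key point is that $Qf$ coincides with the element of minimal $H^{s}(\Omega)$-norm in the closed affine subspace $A_{f}:=f+V=\{h\in H^{s}(\Omega):h(x_{j})=f(x_{j})\;\text{for all }j\}$: indeed $Qf=f-P_{V}f\in A_{f}$ and $Qf\in V^{\perp}$, and for any $v\in V$ one has $\|Qf+v\|_{H^{s}(\Omega)}^{2}=\|Qf\|_{H^{s}(\Omega)}^{2}+\|v\|_{H^{s}(\Omega)}^{2}\ge\|Qf\|_{H^{s}(\Omega)}^{2}$, which identifies $Qf$ as the minimizer over $A_{f}$. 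Thus, whenever $f,g\in H^{s}(\Omega)$ satisfy $f(x_{j})=g(x_{j})$ for all $j$, we have $g-f\in V$, hence $g\in A_{f}$, and consequently $\|Qf\|_{H^{s}(\Omega)}\le\|g\|_{H^{s}(\Omega)}$.

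I do not expect any real obstacle in this argument; the only step that deserves a line of justification is the continuity of point evaluation on $H^{s}(\Omega)$, which is standard in one spatial dimension for $s\ge1$ (and would fail for $s\le d/2$ in higher dimensions, consistently with the paper's restriction to $d=1$). A fully equivalent alternative would be to quote the classical theory of natural interpolating splines directly, since the minimal-norm interpolant produced above is precisely such a spline, but the abstract projection construction is the most self-contained route and makes all three claimed properties transparent.
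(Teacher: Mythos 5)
Your proof is correct and self-contained: the orthogonal-projection construction you give (project onto the orthogonal complement of the closed subspace $V$ of functions vanishing at the nodes, and identify $Qf$ as the minimal-norm interpolant in $f+V$) is precisely the abstract Hilbert-space argument behind the de Boor--Lynch theorem on splines and their minimum properties, which is all the paper invokes --- without proof --- for this lemma. The one step genuinely requiring justification, namely the boundedness of the point evaluations $\delta_{x_j}$ on $H^{s}(0,1)$ for $s\ge1$ (so that $V$ is closed and the projection exists), is correctly identified and handled via the one-dimensional Sobolev embedding.
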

    In practice, $Q$ is a projection operator that maps $H^{s}(\Omega)$ onto a suitable subspace of smooth splines. Most importantly, $Q$ acts as an interpolator with minimum norm: see (i) and (ii), respectively. Furthermore, it is straightforward to see that $P$ is injective over $Q(H^{s}(\Omega)).$ In fact, by letting $g\equiv0$ in (ii), and by exploiting (i), we see that
    \[P(Qf)=0\implies Pf=0=Pg\implies\|Qf\|_{H^{s}(\Omega)}\le\|g\|_{H^{s}(\Omega)}=0\implies Qf\equiv0.\]

    With this in mind, let $Q$ be as in Lemma~\ref{lemma:splines}, and let $\tilde{\operator}_{Q}:=Q\circ\tilde{\operator}$. Since $Q$ is both linear and continuous, it is holomorphic, and, furthermore, \[\tilde{\operator}\in\ha_{\gamma,\epsilon,s}(\paramdomain)\implies\tilde{\operator}_{Q}\in\ha_{\gamma,\epsilon,s}(\paramdomain).\]   
    In particular, since $\tilde{\operator}_{Q}$ satisfies all the properties in Theorem~\ref{theorem:cnn} and $P$ is injective over $\tilde{\operator}_{Q}(\paramdomain)\subseteq Q(H^{s}(\Omega))$, we are allowed to invoke Theorem~\ref{theorem:cnn} with $\operator:=\tilde{\operator}_{Q}$, thus obtaining the error bound in Eq.~\eqref{eq:mainresult} (recall that we assumed the Theorem to hold true whenever the additional hypothesis of injectivity is satisfied). However, since $(\tilde{\operator}_{Q}\mub)(x_{j})=(\tilde{\operator}\mub)(x_{j})=u_{\mub}(x_{j})$ for all $j=1,\dots, N_{h}$, and \[\|\tilde{\operator}_{Q}\|_{L^{\infty}(\polyellipse, H^{s}(\Omega))}\le \|\tilde{\operator}\|_{L^{\infty}(\polyellipse, H^{s}(\Omega))}\]
    due to (ii), it is evident that \eqref{eq:mainresult} also holds for $\operator:=\tilde{\operator}$, thus proving our claim. 
\end{proof}

\noindent\textbf{Step 2.} \textit{There exists a linear operator $T:H^{s}(\Omega)\to\mathbb{R}^{\tilde{m}}$ and a CNN $\Psi:\mathbb{R}^{\tilde{m}}\to\mathbb{R}^{N_{h}}$ satisfying (d), such that}
\begin{equation}
\label{eq:autoencoder}
\sup_{j=1,\dots,N_{h}}|u(x_{j})-\Psi_{j}(Tu)|\le \sqrt{\frac{2}{2s-1}}m^{1/2-s}\|u\|_{H^{s}(\Omega)}\quad\forall u\in \operator(\paramdomain)\subset H^{s}(\Omega),\end{equation}
and $\opnorm{T}\le2.$
\begin{proof}
By \cite[Theorem 1]{franco2023approximation} there exists a continuous linear operator $T:H^{s}(\Omega)\to\mathbb{C}^{2m+1}$ and a linear CNN (no activations nor biases at any level) $\Psi:\mathbb{C}^{2m+1}\to\mathbb{R}^{N_{h}}$, whose depth, size and number of channels satisfy the complexity bounds in (d), such that \eqref{eq:autoencoder} holds\footnote{\review{\cite[Theorem 1]{franco2023approximation} does not mention the bound on the kernel size explicitly; however, this is a direct consequence of \cite[Lemma 3]{franco2023approximation}, upon which the previous Theorem is built.}}. The operator $T$ only depends on $s$, and its operator norm can be bounded as $\opnorm{T}\le2$: we refer the reader to the Appendix, Lemma~\ref{lemma:T}, for a rigorous description of $T$ and its properties.

We note, however, that we cannot readily use such $T$ and $\Psi$, as they take values (respectively, inputs) in $\mathbb{C}^{\tilde{m}}\cong\mathbb{R}^{2\tilde{m}}.$ To fix this, for any $k\in\mathbb{N}$, let $B:\mathbb{C}^{\tilde{m}}\to\mathbb{R}^{\tilde{m}}$ be the linear map
\[\review{B\left([a_{-m}+ib_{-m},\dots,a_{0}+ib_{0} ,\dots,a_{m}+ib_{m}]\right)= [a_{0},a_{1},b_{1},\dots,a_{m},b_{m}]}\]
(recall that $\tilde{m}=2m+1$), and let $B^{\dagger}:\mathbb{R}^{k}\to\mathbb{C}^{k}$ be its pseudo-inverse, acting as
\[\review{B^{\dagger}\left([a_{0},a_{1},b_{1},\dots,a_{m},b_{m}]\right) = [a_{m}-ib_{m},\dots,a_{0},\dots,a_{m}+ib_{m}].}\]
By diving deeper into the definition of $T$, cf. Eq.~\eqref{eq:Tdef} in the Appendix, we see that for all $u\in H^{s}(\Omega)$ the image vector \[Tu=[z_{-m},\dots,z_{0},\dots,z_{m}]\in\mathbb{C}^{\tilde{m}}\] satisfies $z_{0}\in\mathbb{R}$ and $z_{k}=\overline{z_{-k}}$
for all $k\in\{1,\dots,m\}$. Consequently, it is straightforward to see that
\[\Psi(B^{\dagger}BTu) = \Psi(T u)\]
for all $u\in H^{s}(\Omega)$. In light of this, we are allowed to replace $T$ with $B\circ T$ and $\Psi$ with $\Psi\circ B^{\dagger}$, so that the two maps operate on the right spaces (i.e., $\mathbb{R}^{\tilde{m}}$ and not $\mathbb{C}^{\tilde{m}}$). In this concern, note also that $\opnorm{B}\le 1$: in particular, the bound on the operator norm is preserved.
To keep the notation lighter, the presence of $B$ and $B^{\dagger}$ will be omitted.
\\\\
Note: with this construction, $\Psi$ is linear. However, since $T(\operator(\paramdomain))$ is compact, we can easily turn $\Psi$ onto a ReLU CNN (without changing its outputs) by including suitable biases within the layers of the architecture. We refer to Lemma~\ref{lemma:relu} and Corollary~\ref{corollary:relu} in the Appendix for a detailed explanation. Once again, in order to simply the notation, we shall directly assume $\Psi$ to be a ReLU CNN and avoid the introduction of auxiliary architectures.
\end{proof}

\noindent\textbf{Step 3.} \textit{For every $\delta >0$, there exists a ReLU encoder $\Psi':\mathbb{R}^{N_{h}}\to\mathbb{R}^{\tilde{m}}$ such that}
\begin{equation}
\label{eq:deltaclose}\sup_{\review{\mub\in \paramdomain}}\|Tu_{\mub}-\Psi'(Pu_{\mub})\|_{2}<\delta.\end{equation}
\begin{proof}
In light of Step 1, we assume $P$ to be injective over $\operator(\paramdomain).$  Since the latter is compact (recall that $\operator$ is continuous) and $P$ is continuous, this suffices to show that $P$ admits a continuous inverse \[P^{-1}:P(\operator(\paramdomain))\to \operator(\paramdomain),\]
which we may readily extend to a broader map from $\mathbb{R}^{N_{h}}$ onto $H^{s}(\Omega)$ (see, e.g., Dugundji's extension Theorem \cite{dugundji1951extension}): with little abuse of notation, we shall still denote this extension by $P^{-1}$.
Let $E:=T\circ P^{-1}$, so that  $E:\mathbb{R}^{N_{h}}\to\mathbb{R}^{\tilde{m}}$, and fix any tolerance $\delta>0$. Then, there exists a ReLU network $\Psi':\mathbb{R}^{N_{h}}\to\mathbb{R}^{\tilde{m}}$ such that
\begin{equation}
\label{eq:Epsiprime}
\sup_{\mathbf{v}\in P(\operator(\paramdomain))}\|E(\mathbf{v})-\Psi'(\mathbf{v})\|_{2}<\delta.\end{equation}
The existence of such $\Psi'$ is guaranteed by the compactness of $P(\operator(\paramdomain))$ and by the continuity of $E$, as ReLU networks are known to be dense in the space of continuous maps over compact subsets \cite{hornik1991approximation}. Since, by definition, we also have
\begin{equation}
\label{eq:discrete to continuous}
E(Pu_{\mub})=TP^{-1}P(u_{\mub})=Tu_{\mub},
\end{equation}
for all $\mub\in\paramdomain$, it is clear that \eqref{eq:Epsiprime} is nothing but \eqref{eq:deltaclose}.
\end{proof}

\begin{remark}In what follows, we let $\regularizer:\dense\to[0,+\infty)$ be the regularization functional in \cite[Theorem 5]{adcock2022deep}, so that, for any $\phi\in\dense$, the penalty term $\regularizer(\phi)$ corresponds to the $\ell^{1}$ norm of the weights in the output layer of the network $\phi$.
\end{remark}

\noindent\textbf{Step 4.} \textit{Having fixed any $\delta>0$ and $\Psi'$ as in Step 3, for every rescaling factor $\eta>0$ one has}
\begin{multline}
    \label{eq:rescaling0}
    \argmin_{\phi\in\dense}\;\sqrt{\frac{1}{N}\sum_{i=1}^{N}\|\phi(\mub_{i})-\Psi'(Pu_{\mub_{i}})\|^{2}}+\lambda\regularizer(\phi)
    =\\=\frac{1}{\eta}\cdot\argmin_{\phi\in\dense}\;\sqrt{\frac{1}{N}\sum_{i=1}^{N}\|\phi(\mub_{i})-\eta \Psi'(Pu_{\mub_{i}})\|^{2}}+\lambda\regularizer(\phi)
    .
\end{multline}
\begin{proof}
     By definition, $\regularizer(\eta \phi)=|\eta|\regularizer(\phi)$ for all $\eta\in\mathbb{R}$. In fact, $\phi\in\dense\implies \eta\phi\in\dense$, as the latter is easily obtained by multiplying all the terminal weights in $\phi$ by the scalar value $\eta$. Then, it is straightforward to see that, for all $\eta>0$,
\begin{multline}
    \label{eq:rescaling}
    \argmin_{\phi\in\dense}\;\sqrt{\frac{1}{N}\sum_{i=1}^{N}\|\phi(\mub_{i})-\Psi'(Pu_{\mub_{i}})\|^{2}}+\lambda\regularizer(\phi)=\\=\argmin_{\phi\in\dense}\;\sqrt{\frac{1}{N}\sum_{i=1}^{N}\|\eta\phi(\mub_{i})-\eta \Psi'(Pu_{\mub_{i}})\|^{2}}+\lambda\regularizer(\eta\phi)=\\
    =\frac{1}{\eta}\cdot\argmin_{\phi\in\dense}\;\sqrt{\frac{1}{N}\sum_{i=1}^{N}\|\phi(\mub_{i})-\eta \Psi'(Pu_{\mub_{i}})\|^{2}}+\lambda\regularizer(\phi)
    .
\end{multline}
\end{proof}

\noindent
\textbf{Step 5.} 
\textit{\review{Let $\eta^{*}:=(4\|\operator\|_{L^{\infty}(\polyellipse, H^{s}(\Omega))})^{-1}$. For every $\delta>0$, and a corresponding choice of $\Psi'$, one has}}
\begin{equation}
    \label{eq:practicalbound0}
    \mathbb{E}_{\mub\sim\probability}^{1/2}\|Tu_{\mub}-\hat{\phi}(\mub)\|^{2}_{2}\le \review{\eta_{*}^{-1}c_{4}\exp\left(-\frac{1}{\sqrt{2}}\gamma\tilde{N}^{1/(2p)}\right)+c_4\delta},
\end{equation}
\textit{where $\hat{\phi}\in\dense$ is any minimizer of \eqref{eq:minim}.}
\begin{proof}
    For any $\eta>0$, let us consider the rescaled minimization problem in Step 4, and let
\begin{equation}
\label{eq:rescaled}
\hat{\phi}_{\eta}:=\argmin_{\phi\in\dense}\;\sqrt{\frac{1}{N}\sum_{i=1}^{N}\|\phi(\mub_{i})-\eta \Psi'(Pu_{\mub_{i}})\|^{2}}+\lambda\regularizer(\phi).
\end{equation}
Let $f_{\eta}:\paramdomain\to\mathbb{R}^{\tilde{m}}$ be defined as $f_{\eta}(\mub):=\eta Tu_{\mub}=\eta T\operator(\mub).$ Let $\polyellipse_{\brho}$ be the Bernstein polyellipse in Definition~\ref{def:hidden} corresponding to $\operator\in\ha_{\gamma,\epsilon,s}(\paramdomain).$ Since $T$ is linear, and thus entire, it is clear that $f_{\eta}$ admits a holomorphic extension to $\polyellipse_{\brho}$. Furthermore, by composition,
\[\|f_{\eta}\|_{L^{\infty}(\polyellipse_{\brho}, \mathbb{R}^{\tilde{m}})}\le\eta\opnorm{T}\cdot\|\operator\|_{L^{\infty}(\polyellipse_{\brho}, H^{s}(\Omega))}\le 2\eta\|\operator\|_{L^{\infty}(\polyellipse_{\brho}, H^{s}(\Omega))}.\]
In light of this, hereon we shall fix the rescaling parameter to
\[\eta_{*}:=\frac{1}{4\|\operator\|_{L^{\infty}(\polyellipse_{\brho}, H^{s}(\Omega))}},\]
so that $f:=f_{\eta_{*}}$ satisfies $\|f\|_{L^{\infty}(\polyellipse_{\brho}, \mathbb{R}^{\tilde{m}})}\le1/2.$ We now recall that, thanks to \eqref{eq:deltaclose}, we also have 
\[\sup_{\mub\in\paramdomain}\|f(\mub)-\eta_{*}\Psi'(P u_{\mub})\|_{2}<\eta_{*}\delta.\]
This allows us to interpret $\eta_{*}\Psi'(Pu_{\mub_{i}})$ as
perturbations of $f(\mub_{i})$, and thus consider
Problem \eqref{eq:rescaled} as the training of a neural network model with ground truth $f$ and noisy samples $\eta_{*}\Psi'(Pu_{\mub_{i}})\approx f(\mub_{i})$. In particular, by applying \cite[Theorem 5]{adcock2022deep} to $f$ and \eqref{eq:rescaled} with $\eta=\eta_{*}$, we see that the loss minimizer $\hat{\phi}_{\eta_{*}}$ satisfies
\begin{equation}
    \label{eq:practicalbound}
    \mathbb{E}_{\mub\sim\probability}^{1/2}\|f(\mub)-\hat{\phi}_{\eta_{*}}(\mub)\|^{2}_{2}\le c_{4}\exp\left(-\frac{1}{\sqrt{2}}\gamma\tilde{N}^{1/(2p)}\right)+c_{4}\eta_{*}\delta,
    \end{equation}
with probability $1-\varepsilon$, for all $\tilde{N}\ge N_{0}$, where $N_{0}=N_{0}(\gamma,p,f)$ is a lower bound on the size of the training set. 
Let now $\phi$ be (any of) the original minimizer in the Theorem. As noted in \eqref{eq:rescaling}, we have $\hat{\phi}=\hat{\phi}_{\eta_{*}}\cdot\eta_{*}^{-1}$ for some minimizer $\phi_{\eta_*}$ of the rescaled problem \eqref{eq:rescaled}. Thus, 
\begin{multline}
    \label{eq:practicalbound2}
    \mathbb{E}_{\mub\sim\probability}^{1/2}\|Tu_{\mub}-\hat{\phi}(\mub)\|^{2}_{2}=\\=\eta_{*}^{-1}\mathbb{E}_{\mub\sim\probability}^{1/2}\|\eta_{*}Tu_{\mub}-\hat{\phi}_{\eta_{*}}(\mub)\|^{2}_{2}\le\\\le \review{\eta_{*}^{-1}c_{4}\exp\left(-\frac{1}{\sqrt{2}}\gamma\tilde{N}^{1/(2p)}\right)+c_4\delta},
\end{multline}
as $f(\mub)=\eta_{*}Tu_{\mub}.$
\end{proof}

\noindent\textbf{Step 6.} \textit{The error bound in \eqref{eq:mainresult} holds true.}
\begin{proof}
Let $\|\cdot\|_{1}$ denote the $1$-norm on $\mathbb{R}^{\tilde{m}}$, so that $\|\mathbf{a}\|_{1}:=\sum_{i=1}^{\tilde{m}}|a_{j}|.$ We recall that the following hold
\begin{equation}
\label{eq:l1diseq}
\|\mathbf{a}-\mathbf{b}\|_{1}\le\sqrt{\tilde{m}}\|\mathbf{a}-\mathbf{b}\|_{2},\quad\quad|\Psi_{j}(\mathbf{a})-\Psi_{j}(\mathbf{b})|\le\|\mathbf{a}-\mathbf{b}\|_{1}.
\end{equation}
For the interested reader, we refer to \cite{franco2023approximation}, Pag. 7, for a detailed proof of the second inequality. 
We also note that, by definition,
\begin{equation}
    \label{eq:normbound}
    \|u_{\mub}\|_{H^{s}(\Omega)}\le \|\operator\|_{L^{\infty}(\polyellipse_{\brho}, H^{s}(\Omega))}.
\end{equation}
\review{To ease notation, let
$$E:=\expe_{\mub\sim\probability}^{1/2}\left[\sup_{j}|u_{\mub}(x_{j})-\Psi_{j}(\hat{\phi}(\mub))|^{2}\right]$$}
Since
\begin{equation*} \review{E}\le
\expe_{\mub\sim\probability}^{1/2}\left[\sup_{j}|u_{\mub}(x_{j})-\Psi_{j}(Tu_{\mub})|^{2}\right]+\expe_{\mub\sim\probability}^{1/2}\left[\sup_{j}|\Psi_{j}(Tu_{\mub})-\Psi_{j}(\hat{\phi}(\mub))|^{2}\right],
\end{equation*}
combining \eqref{eq:autoencoder}, \eqref{eq:practicalbound2}, \eqref{eq:l1diseq} and \eqref{eq:normbound}, ultimately yields
\begin{multline*}
\review{E}\le
\sqrt{\frac{2m^{1-2s}}{2s-1}}\|\operator\|_{L^{\infty}(\polyellipse_{\brho}, H^{s}(\Omega))}+\expe_{\mub\sim\probability}^{1/2}\|Tu_{\mub}-\review{\hat{\phi}}(\mub)\|^{2}_{1}\le\\
\le\sqrt{\frac{2m^{1-2s}}{2s-1}}\|\operator\|_{L^{\infty}(\polyellipse_{\brho}, H^{s}(\Omega))}+\sqrt{\tilde{m}}\expe_{\mub\sim\probability}^{1/2}\|Tu_{\mub}-\review{\hat{\phi}}(\mub)\|^{2}_{2},
\end{multline*}  
and thus,
\begin{equation}
\label{eq:conclusion0}
\review{E}\le\left(
\sqrt{\frac{2m^{1-2s}}{2s-1}} + \review{4\sqrt{\tilde{m}}
c_{4}\exp\left(-\frac{1}{\sqrt{2}}\gamma\tilde{N}^{1/(2p)}\right)+c_4\sqrt{\tilde{m}}g^{-1}\delta}\right)g
\end{equation} 
where, for better readability, we have \review{set $g:=\|\operator\|_{L^{\infty}(\polyellipse_{\brho}, H^{s}(\Omega))}$. Let us now fix the value of $\delta>0$ such that
\begin{equation}
    \delta \le \frac{g}{c_4}\sqrt{\frac{2m^{1-2s}}{(2s-1)\tilde{m}}}.
\end{equation}
Then, \eqref{eq:conclusion0} can be simplified to}
\begin{equation*}
\review{E\le\left(
2\sqrt{\frac{2m^{1-2s}}{2s-1}} + \review{4\sqrt{\tilde{m}}
c_{4}\exp\left(-\frac{1}{\sqrt{2}}\gamma\tilde{N}^{1/(2p)}\right)}\right)g.}
\end{equation*}
\noindent Since $\sqrt{\tilde{m}}\le\sqrt{4m}$, it \review{follows that,}
\begin{equation}
\label{eq:conclusion}
\review{E\le\max\{2,8c_4\}\left(
\sqrt{\frac{2m^{1-2s}}{2s-1}} + \sqrt{m}
\exp\left(-\frac{1}{\sqrt{2}}\gamma\tilde{N}^{1/(2p)}\right)\right)g.}
\end{equation}
\review{In  particular, up to re-naming the universal constant as $c_{4}':=\max\{2,8c_4\}$}, Eq.~\eqref{eq:conclusion} immediately yields the desired conclusion.
\end{proof}


\section{Conclusion}
\label{sec:conclusions}

Motivated by the empirical success of deep-learning-based reduced order models for parametric PDEs, we  proposed a new  \emph{practical existence theorem} (Theorem~\ref{theorem:cnn}). \review{Our analysis focuses on models relying on deep autoencoders, where two networks, $\Psi'$ and $\Psi$ are used to compress the output, whereas a third network, $\phi$, is used to learn the parameter-to-latent-variables map; the parameter-to-solution operator $\operator$ is then approximated via composition, $\operator\approx\Psi\circ\phi$.} Focusing on the case of deep convolutional autoencoders, our theorem provides an explicit error bound for trained models 
\review{in which the decoder $\Psi$ is constructed explicitly and the reduced network $\phi$ is trained via regularized empirical loss minimization. In doing so, the theorem also provides a list of sufficient conditions on the overall complexity of the reduced order model, $\Psi\circ\phi$, as well as detailed information concerning the training phase of the reduced network $\phi$ (sample size, sampling strategy, choice of the loss function). Notably,} 
our theorem validates several  heuristic observations from previous numerical studies, hence reducing the gap between theory and practice in this fast-growing area.

We conclude by mentioning some limitations of our theory, whose study is left to future work.  First, our theory only covers the case of one-dimensional physical domains. Generalizing the theory to higher dimensions is an important open question. \review{In this regard, there are two main obstacles that hinder the extensibility of our analysis to $d>1$. The first one is a technicality regarding the operator $T$ in the proof of Theorem \ref{theorem:cnn} (see also Lemma \ref{lemma:T}). Simply put, the latter consists of a periodicization operator, mapping arbitrary functions onto smooth periodic signals, composed with the truncated Fourier transform for data compression. Adapting this idea for $d>1$ is nontrivial since domains can have arbitrary shapes. One idea could be to embed $\Omega$ onto a suitable hypercube $[-1,1]^{d}$ and, depending on the smoothness of $\partial\Omega$, leverage well-known estimates of Sobolev extension operators, see, e.g., \cite{fefferman2014sobolev}. The second issue, instead, is merely practical. Our construction relies on a convolutional architecture that can replicate the performances of the Fourier transform. In higher-dimensions, this requires a very careful adaptation of \cite[Lemma 1-4]{franco2023approximation}.}



\review{Aside from this, another}
interesting line of future work is the study of further parametric PDEs with holomorphic parametric dependence, such as elliptic problems with higher regularity ($s>1$), parabolic problems,  or PDEs over parametrized domains (see, e.g., \cite{cohen2018shape}). 
\review{Here, in fact, we only discussed the application of our theory to the parametric diffusion equation (see Section~\ref{sec:param_diff}).}

\review{As we mentioned previously, it is worth remarking  that} Theorem~\ref{theorem:cnn} only addresses the training of the reduced network $\phi$. 
The 
encoder $\Psi$ and the decoder $\Psi'$, instead, are constructed \review{either using universal approximation theorems (encoder), or explicitly (decoder), that is, by hard-coding all weights and biases in the architecture}. 
\review{In addition, the fact that $\phi$ has standard as opposed to convolutional is inherited from \cite[Theorem~5]{adcock2022deep}, upon which Theorem~\ref{theorem:cnn} relies.}

\review{We conclude with some comments on the curse of dimensionality. First, we observe that the sample complexity is affected by the curse in Theorem~\ref{theorem:cnn}. In fact, considering the term involving $\tilde{N}$ in \eqref{eq:mainresult}, for a given target accuracy $\tau>0$, one has $\sqrt{m}\exp(-\gamma \tilde{N}^{1/(2p)}/\sqrt{2}) \leq \tau$ if anly only if $\tilde{N} \geq (\sqrt{2}\log(\sqrt{m}/\tau)/\gamma)^{2p}$, which leads to an exponential dependence of $\tilde{N}$ on $p$. Moreover, our results lose significance if $p \approx N_h $, mainly due to the curse of dimensionality affecting the reduced network $\phi:\mathbb{R}^{p} \to \mathbb{R}^{\tilde{m}}$. In fact, our complexity bounds include the exponential term $\tilde{N}^{1/(2p)}$. In principle, this issue could be addressed in---at least---three ways. The first one could be to focus on a smaller class of operators, that is, analytic parameter-to-solution maps enjoying suitable summability properties in their power series expansion, as in \cite{lanthaler2022error, schwab2023deep}. Second, one could consider proving a practical existence theorem using algebraic as opposed to exponential best $s$-term decay rates (see \cite[Chapter~3]{adcock2022sparse}) in the spirit of \cite[Theorem 8.1]{adcock2024learning}. This would also have to be combined (similarly to the first strategy) with higher regularity assumptions involving infinite-dimensional analyticity and would require an adaptation of the argument in \cite{adcock2024learning} to the Hilbert- or, at least, vector-valued setting.  A third approach could rely on incorporating an additional compression phase at input, either through autoencoders or linear projections; see, e.g., \cite{franco2023latent, herrmann2022neural, lanthaler2023operator}. Nevertheless, adapting these ideas to our context is challenging due to the need for a theory describing the implementation and training of a (convolutional) encoder; as we mentioned previously, this is, in general, highly nontrivial.}






\section*{Acknowledgments}
SB acknowledges the support of the Natural Sciences and Engineering Research Council
of Canada (NSERC) through grant RGPIN-2020-06766 and the Fonds de Recherche du Qu\'ebec Nature et Technologies (FRQNT) through grant 313276.
NF is member of the Gruppo Nazionale per il Calcolo Scientifico (GNCS) of the Istituto Nazionale di Alta Matematica (INdAM). The present research is part of the activities of project Dipartimento di Eccellenza 2023-2027, Department of Mathematics, Politecnico di Milano, funded by MUR, and of project Cal.Hub.Ria (Piano Operativo Salute, traiettoria 4), funded by MSAL.

The authors would also like to thank Prof.\ Paolo Zunino (Politecnico di Milano) for promoting the development and publication of this work \review{and Prof.\ Ben Adcock (Simon Fraser University) for providing helpful comments on a earlier version of this manuscript}.

\appendix
\section{Hermite polynomials and signal periodicization}
\label{appendix:hermite}
\newcommand{\psjbound}{(\sqrt{1/2})^{j+1}}

\newtheorem{lemmaA}{Lemma}[section]

This Appendix presents two supplementary results, both of which are essential for our construction. In particular, we expand on the definition of the operator $T$ appearing in the proof of Theorem \ref{theorem:cnn}, while simultaneously deriving some useful inequalities. 

Following \cite{franco2023approximation}, our approach involves employing a periodicization operator that leverages on Hermite interpolation: see Fig.~\ref{fig:periodicization} for a visual representation. Thus, we first derive some preliminary results related to Hermite polynomials (Lemma~\ref{lemma:hermite}), and then proceed with a synthetic discussion about the definition and the analytical properties of the operator $T$ (Lemma~\ref{lemma:T}).

\begin{lemmaA}
    \label{lemma:hermite}
    Let $s\in\mathbb{N}$, $s\ge1$. For any $0\le j\le s-1$, let $p_{s,j}$ and $q_{s,j}$ be the unique polynomials of degree $2s-1$ for which the following hold true
    \[\begin{array}{lll}
         p_{s,j}^{(k)}(0)=\delta_{j,k}& &p_{s,j}^{(k)}(1)=0 \\\\
         q_{s,j}^{(k)}(0)=0& &q_{s,j}^{(k)}(1)=\delta_{j,k},
    \end{array}\]
    Then, $\|q_{s,j}\|_{L^{2}(0,1)}=\|p_{s,j}\|_{L^{2}(0,1)}$. Furthermore, $\|p_{s,j}\|_{L^{2}(0,1)}\le \psjbound.$
\end{lemmaA}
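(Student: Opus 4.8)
The plan is to derive a closed form for the Hermite fundamental polynomials $p_{s,j}$ and then read off everything from it. Writing $(1-x)^{-s}=\sum_{k\ge 0}\binom{s-1+k}{k}x^{k}$ for the binomial series, I claim that
\[
p_{s,j}(x)=\frac{x^{j}}{j!}\,(1-x)^{s}\sum_{k=0}^{s-1-j}\binom{s-1+k}{k}x^{k}.
\]
This is a polynomial of degree $j+s+(s-1-j)=2s-1$; the factor $(1-x)^{s}$ gives $p_{s,j}^{(k)}(1)=0$ for $0\le k\le s-1$; and since the truncated sum equals $(1-x)^{-s}$ up to a tail of order $O(x^{s-j})$, multiplying by $(1-x)^{s}$ shows $\frac{j!}{x^{j}}p_{s,j}(x)=1+O(x^{s-j})$, whence $p_{s,j}(x)=\frac{x^{j}}{j!}+O(x^{s})$ near $0$ and therefore $p_{s,j}^{(k)}(0)=\delta_{j,k}$ for $0\le k\le s-1$. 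By uniqueness of the Hermite interpolant of degree $\le 2s-1$ this is $p_{s,j}$, and the same uniqueness applied to $\tilde q(x):=(-1)^{j}p_{s,j}(1-x)$ — which satisfies $\tilde q^{(k)}(0)=(-1)^{j+k}p_{s,j}^{(k)}(1)=0$ and $\tilde q^{(k)}(1)=(-1)^{j+k}p_{s,j}^{(k)}(0)=\delta_{j,k}$ — yields $q_{s,j}(x)=(-1)^{j}p_{s,j}(1-x)$. The substitution $x\mapsto 1-x$ then gives $\|q_{s,j}\|_{L^{2}(0,1)}=\|p_{s,j}\|_{L^{2}(0,1)}$, which is the first claim.

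For the norm bound, the closed form shows $0\le p_{s,j}(x)\le \frac{x^{j}}{j!}\,p_{s,0}(x)$ on $[0,1]$ — the sum defining $p_{s,j}$ is a sub-sum with nonnegative terms of the one defining $p_{s,0}$ — and likewise $0\le p_{s,0}(x)\le (1-x)^{s}(1-x)^{-s}=1$. Hence
\[
\|p_{s,j}\|_{L^{2}(0,1)}^{2}\le \frac{1}{(j!)^{2}}\int_{0}^{1}x^{2j}p_{s,0}(x)^{2}\,dx\le \frac{1}{(j!)^{2}}\int_{0}^{1}x^{2j}p_{s,0}(x)\,dx,
\]
so it suffices to prove $\int_{0}^{1}x^{2j}p_{s,0}(x)\,dx\le \frac{1}{2(2j+1)}$. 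I will establish this from two facts: (i) $p_{s,0}$ is non-increasing on $[0,1]$, and (ii) $\int_{0}^{1}p_{s,0}=\tfrac12$. Granting them, the superlevel sets $\{x\in[0,1]:p_{s,0}(x)>t\}$ are intervals $[0,a_{t})$ with $a_{t}\in[0,1]$, $\int_{0}^{1}a_{t}\,dt=\int_{0}^{1}p_{s,0}=\tfrac12$, and the layer-cake/Fubini identity gives $\int_{0}^{1}x^{2j}p_{s,0}(x)\,dx=\frac{1}{2j+1}\int_{0}^{1}a_{t}^{\,2j+1}\,dt\le \frac{1}{2j+1}\int_{0}^{1}a_{t}\,dt=\frac{1}{2(2j+1)}$, using $a_{t}^{\,2j+1}\le a_{t}$.

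Both auxiliary facts come from the closed form. For (ii), $\int_{0}^{1}p_{s,0}=\sum_{k=0}^{s-1}\binom{s-1+k}{k}\int_{0}^{1}x^{k}(1-x)^{s}\,dx=\sum_{k=0}^{s-1}\binom{s-1+k}{k}\frac{k!\,s!}{(k+s+1)!}=\sum_{k=0}^{s-1}s\Big(\frac{1}{k+s}-\frac{1}{k+s+1}\Big)=\tfrac12$ by telescoping. For (i), $p_{s,0}$ vanishes to order $s$ at $x=1$ and $p_{s,0}^{(k)}(0)=0$ for $1\le k\le s-1$, so $p_{s,0}'$ (of degree $\le 2s-2$) is divisible by $x^{s-1}(x-1)^{s-1}$, hence $p_{s,0}'(x)=c\,x^{s-1}(x-1)^{s-1}$, which has constant sign on $(0,1)$; since $\int_{0}^{1}p_{s,0}'=p_{s,0}(1)-p_{s,0}(0)=-1<0$, that sign is negative. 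Combining the pieces, $\|p_{s,j}\|_{L^{2}(0,1)}^{2}\le \frac{1}{2(j!)^{2}(2j+1)}$, and since $2^{j}\le (j!)^{2}(2j+1)$ for every $j\ge 0$ (immediate by induction, with equality at $j=0$), this is $\le 2^{-(j+1)}=\big(\psjbound\big)^{2}$, which is the stated bound.

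The main obstacle is the uniform-in-$s$ bound for small $j$. The naive estimate $0\le p_{s,j}(x)\le x^{j}/j!$ only yields $\|p_{s,j}\|_{L^{2}(0,1)}^{2}\le \frac{1}{(j!)^{2}(2j+1)}$, which is strictly weaker than the target $2^{-(j+1)}$ exactly for $j\in\{0,1\}$; the missing factor of $\tfrac12$ is genuinely present only in the limit $s\to\infty$, so recovering it for all $s$ forces one to use the monotonicity of $p_{s,0}$ together with the exact mass identity $\int_{0}^{1}p_{s,0}=\tfrac12$ — both of which, in turn, rely on the closed-form expression above. The remaining verifications are routine.
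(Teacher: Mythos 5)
Your proof is correct, but it follows a genuinely different route from the paper's. The paper never writes down the explicit Hermite basis: it proves positivity of $p_{s,j}$ by factoring out $(1-x)^s$ and checking that the cofactor has nonnegative coefficients, handles $j=0$ via the integral representation $p_{s,0}(x)=1-\int_0^x y^{s-1}(1-y)^{s-1}\,dy\,\big/\int_0^1 y^{s-1}(1-y)^{s-1}\,dy$ (monotonicity plus the symmetry $p_{s,0}(x)=1-p_{s,0}(1-x)$ give $\|p_{s,0}\|_{L^2}^2\le\int_0^1 p_{s,0}=1/2$), and then reduces general $j$ to $j=0$ through the recursion $p_{s,j}(x)=\int_0^x p_{s-1,j-1}+(p_{s,0}(x)-1)\int_0^1 p_{s-1,j-1}$, each of the $j+1$ iterates contributing one factor $\sqrt{1/2}$. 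You instead exhibit the closed form $p_{s,j}(x)=\frac{x^j}{j!}(1-x)^s\sum_{k=0}^{s-1-j}\binom{s-1+k}{k}x^k$, deduce the pointwise bound $0\le p_{s,j}(x)\le\frac{x^j}{j!}p_{s,0}(x)$ with $0\le p_{s,0}\le 1$, recover the crucial factor $1/2$ by a layer-cake rearrangement based on the same two facts about $p_{s,0}$ (monotonicity and half-unit mass), and finish with the combinatorial inequality $(j!)^2(2j+1)\ge 2^j$. Your intermediate estimate $\|p_{s,j}\|_{L^2}^2\le\frac{1}{2(j!)^2(2j+1)}$ is in fact sharper than the stated geometric bound once $j\ge 2$ (factorial rather than geometric decay), though the geometric form is all the subsequent lemma needs; the price is the extra machinery (explicit formula, layer cake, final induction), whereas the paper's recursion produces the factor $\sqrt{1/2}$ per step directly. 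All the steps I checked --- the closed form and its interpolation properties, the telescoping evaluation $\int_0^1 p_{s,0}=1/2$, the identification $p_{s,0}'(x)=c\,x^{s-1}(x-1)^{s-1}$ with $c<0$, and the concluding induction --- are valid.
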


\begin{proof}
    Since $q_{s,j}(x)=(-1)^{j}p_{s,j}(1-x)$, the first statement is obvious. As for the second one, we shall proceed in three steps.\\\\
    \textbf{Step 1.} \emph{We prove that $p_{s,j}(x)\ge0\;\forall x\in[0,1].$}\vspace{0.25cm}\\
    We note that, since $p_{s,j}$ has a zero of order $s$ at 1, we have
    \[p_{s,j}(x)=g(x)(1-x)^{s}\]
    for some polynomial $g$ of degree $s-1$, which depends on $s$ and $j$. We now notice that, since $g(x)=p_{s,j}(x)(1-x)^{-s}$, one has 
    \[g^{(k)}(x)=\sum_{l=0}^{k}\binom{k}{l}p_{s,j}^{(k-l)}(x)(1-x)^{-s-l}\frac{(s+l)!}{s!}.\]
    Let $a_{k}$ be the $k$th coefficient in the polynomial expansion of $g$. Then, the above implies
    \[a_{k}=\frac{1}{k!}g^{(k)}(0)=\frac{1}{k!}\sum_{l=0}^{k}\binom{k}{l}p_{s,j}^{(k-l)}(0)\frac{(s+l)!}{s!}\ge0,\]
    since $p_{s,j}^{(l)}(0)\ge0$ for all $0\le l\le k\le s-1$. In particular, all the coefficients in $g$ are positive, implying $g\ge0$ on $[0,+\infty)$, and thus $p_{s,j}\ge0$ on $[0,1]$, as claimed.
    \\\\
    \textbf{Step 2.} \emph{We prove that $\|p_{s,0}\|_{L^{2}(0,1)}\le\sqrt{1/2}.$}\vspace{0.25cm}\\
    Using the definition, it is straightforward to verify that the polynomial $p_{s,0}$ can be written in closed form as
    \begin{equation}
        \label{eq:ps0integral}
        p_{s,0}(x)=1-\frac{\int_{0}^{x}y^{s-1}(1-y)^{s-1}dy}{\int_{0}^{1}y^{s-1}(1-y)^{s-1}dy}.
    \end{equation}    
    In fact, the right-hand-side of \eqref{eq:ps0integral}: i) is a polynomial of degree $(s-1)+(s-1)+1=2s-1$; ii) vanishes at $x=1$, while it equals 1 at $x=0$; iii) its derivative is proportional to $x^{s-1}(1-x)^{s-1}$, which vanishes at $x=0,1$ with all its higher order derivatives (up to degree $s-2$). Since the polynomial $p_{s,0}$ is uniquely characterized by such conditions, this proves that the identity in \eqref{eq:ps0integral} holds true.
    
    We now note that, since the integrand $y\mapsto y^{s-1}(1-y)^{s-1}$ is positive, the polynomial $p_{s,0}$ happens to be monotone nonincreasing in $[0,1]$. Consequently, 
    \[0\le p_{s,0}(x)\le p_{s,0}(0)= 1,\]
    for all $x\in[0,1]$, and thus
    \begin{equation}
        \label{eq:l1l2}
            \|p_{s,0}\|_{L^{2}(0,1)}^{2} = \int_{0}^{1}p_{s,0}^{2}(x)dx\le\int_{0}^{1}p_{s,0}(x)dx
    \end{equation}
    Furthermore, due symmetry, it is straightforward to see that
    \[p_{s,0}(x)=1-p_{s,0}(1-x),\]
    from which, up to a simple change of variables, it follows that
    \[\int_{0}^{1}p_{s,0}(x)dx=1-\int_{0}^{1}p_{s,0}(1-x)dx=1+\int_{1}^{0}p_{s,0}(z)dz=1-\int_{0}^{1}p_{s,0}(z)dz\]
    \[\implies \int_{0}^{1}p_{s,0}(x)dx=\frac{1}{2},\]
    which in turn implies $\|p_{s,0}\|_{L^{2}(0,1)}\le\sqrt{1/2}$ due to \eqref{eq:l1l2}.
    \\\\
    \textbf{Step 3.} \emph{We prove that $\|p_{s,j}\|_{L^{2}(0,1)}\le\psjbound.$}\vspace{0.25cm}\\
    To prove the remaining cases, we shall exploit the following recursive formula,  
    \[    
    p_{s,j}(x)=\int_{0}^{x}p_{s-1,j-1}(y)dy+\left(p_{s,0}(x)-1\right)\int_{0}^{1}p_{s-1,j-1}(y)dy,
    \]
    which can be easily verified by hand. We re-write the above as
      \begin{equation}
      p_{s,j}(x)=p_{s,0}(x)\int_{0}^{1}p_{s-1,j-1}(y)dy-\int_{x}^{1}p_{s-1,j-1}(y)dy.   \end{equation}
   Since all polynomials in the form $p_{\tilde{s},\tilde{j}}$ are positive (cf. Step 1), we have
   \[0\le p_{s,j}(x)\le p_{s,0}(x)\int_{0}^{1}p_{s-1,j-1}(y)dy,\]
   implying that,
   \[\|p_{s,j}\|_{L^{2}(0,1)}\le \|p_{s,0}\|_{L^{2}(0,1)}\int_{0}^{1}p_{s-1,j-1}(y)dy\le \|p_{s,0}\|_{L^{2}(0,1)}\|p_{s-1,j-1}\|_{L^{2}(0,1)}.\]
   Finally, iterating the above and applying the result at Step 2, yields
   \[\|p_{s,j}\|_{L^{2}(0,1)}\le \|p_{s,0}\|_{L^{2}(0,1)}\cdot \|p_{s-1,0}\|_{L^{2}(0,1)}\cdot\ldots\cdot \|p_{s-j,0}\|_{L^{2}(0,1)}\le\psjbound.\]
\end{proof}

\begin{lemmaA}
    \label{lemma:T}
    Let  $\Omega:=(0,1).$ Let $s,m\in\mathbb{N}$, $s,m\ge1$. For any $f\in H^{s}(\Omega)$, let $p_{f}$ be the polynomial of degree $2s-1$ given by
    \[p_{f}(x):=\sum_{j=0}^{s-1}[f^{(j)}(1)-f^{(j)}(0)]\cdot\left[p_{s,j}(x)-q_{s,j}(x)\right],\]
    and let $\tilde{f}\in H^{s}(\Omega)$ be the periodicized version of $f$, which we define as (cf. Fig.~\ref{fig:periodicization})
    \begin{equation}
        \label{eq:Tdef}
        \tilde{f}(x):=
        \begin{cases}
        f(2x)+p_{f}(2x) & 0\le x\le 1/2\\
        f(2x-1) & 1/2<x\le1.
        \end{cases}
    \end{equation}
    Define the linear operator $T:H^{s}(\Omega)\to\mathbb{C}^{2m+1}$ as
    \[T: f\mapsto \left[\int_{0}^{1}\tilde{f}(x)e^{\review{2\pi\mathbf{i}mx}}dx,\dots,\int_{0}^{1}\tilde{f}(x)e^{-2\pi\mathbf{i}mx}dx\right].\]
    Then, $\opnorm{T}\le 2.$
\end{lemmaA}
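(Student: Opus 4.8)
The plan is to bound $\opnorm{T}$ by tracking the norm through the three maps that compose it: the periodicization $f \mapsto \tilde f$, the restriction of the Fourier transform that produces the coefficients $c_k := \int_0^1 \tilde f(x) e^{-2\pi \mathbf{i} k x}\,dx$ for $|k| \le m$, and the assembly of these coefficients into a vector in $\mathbb{C}^{2m+1}$. Since $T$ is linear, it suffices to show $\|Tf\|_2 \le 2\|f\|_{H^s(\Omega)}$ for all $f$, where I use the Euclidean norm on $\mathbb{C}^{2m+1}$. First I would observe that, by Bessel's inequality (or Parseval on $L^2(0,1)$), $\|Tf\|_2^2 = \sum_{|k|\le m} |c_k|^2 \le \sum_{k\in\mathbb{Z}} |c_k|^2 = \|\tilde f\|_{L^2(0,1)}^2 \le \|\tilde f\|_{H^s(0,1)}^2$. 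So the whole problem reduces to the single estimate $\|\tilde f\|_{H^s(0,1)} \le 2\|f\|_{H^s(\Omega)}$.

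To bound $\|\tilde f\|_{H^s(0,1)}$, I would split the integral defining the $H^s$-norm over the two pieces $[0,1/2]$ and $[1/2,1]$ and change variables to bring each back to $(0,1)$. On $[1/2,1]$, $\tilde f(x) = f(2x-1)$, and the substitution $y = 2x-1$ gives $\int_{1/2}^1 |\tilde f^{(k)}(x)|^2\,dx = 2^{2k}\int_0^1 |f^{(k)}(y)|^2 \cdot \tfrac12\,dy = 2^{2k-1}\|f^{(k)}\|_{L^2(0,1)}^2$; summing over $0\le k\le s$ and using $2^{2k-1} \le 2^{2s-1}$ shows this piece contributes at most a constant times $\|f\|_{H^s(\Omega)}^2$. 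The piece on $[0,1/2]$ is $f(2x) + p_f(2x)$, so by the triangle inequality its $H^s$-norm is controlled by the analogous rescaled norm of $f$ plus that of the correction polynomial $p_f$. For the $p_f$ term I would invoke Lemma~\ref{lemma:hermite}: each coefficient $f^{(j)}(1) - f^{(j)}(0)$ is bounded (via the trace/Sobolev embedding $H^s(\Omega) \hookrightarrow C^{s-1}(\overline\Omega)$, valid since $s\ge 1$) by a constant times $\|f\|_{H^s(\Omega)}$, and $\|p_{s,j} - q_{s,j}\|_{L^2}$ and the norms of their derivatives are bounded by the explicit constants $(\sqrt{1/2})^{j+1}$ from the lemma. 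Summing the geometric-type series over $j = 0,\dots,s-1$ yields a bound on $\|p_f\|_{H^s}$ (and hence on the rescaled $\|p_f(2\cdot)\|_{H^s(0,1/2)}$) that is again a universal constant times $\|f\|_{H^s(\Omega)}$.

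The delicate point — and the one I expect to be the main obstacle — is not any individual estimate but rather the bookkeeping of all the rescaling constants $2^{2k-1}$ arising from the dilations by $2$, together with the Hermite constants, so that everything collapses to the clean bound $2$ rather than some larger $s$-dependent constant. I suspect the intended argument is sharper than the crude triangle-inequality route: in particular, the polynomial $p_f$ is designed precisely so that $f(2x) + p_f(2x)$ and $f(2x-1)$ match in all derivatives up to order $s-1$ at $x = 1/2$ (and, by periodicity, at $x=0\sim 1$), making $\tilde f$ genuinely $H^s$ on the torus; and the factor-of-$2$ bound most plausibly comes from an energy identity on the periodized function rather than from bounding the two halves separately. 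If the naive splitting gives a constant worse than $2$, I would instead argue directly: write $\|\tilde f\|_{L^2(0,1)}^2 = \tfrac12\int_0^1 |f(y) + p_f(y)|^2\,dy + \tfrac12\int_0^1 |f(y)|^2\,dy \le \tfrac12(\|f\|_{L^2} + \|p_f\|_{L^2})^2 + \tfrac12\|f\|_{L^2}^2$, using that the derivative terms in $\|\tilde f\|_{H^s}^2$ must be handled by the same decomposition but exploiting that $\tilde f$ has no boundary-jump contributions. Either way, the structural content is: Bessel kills the Fourier truncation for free, the $[1/2,1]$ half is an exact isometry up to the dilation factor, and Lemma~\ref{lemma:hermite} makes the periodicization correction geometrically small — and the claim $\opnorm{T}\le 2$ is what these three ingredients combine to give.
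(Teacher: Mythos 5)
Your opening reduction gives the gap away: after correctly observing that Bessel's inequality yields $\|Tf\|_2 \le \|\tilde f\|_{L^2(0,1)}$, you then bound $\|\tilde f\|_{L^2} \le \|\tilde f\|_{H^s}$ and declare that ``the whole problem reduces to'' $\|\tilde f\|_{H^s(0,1)} \le 2\|f\|_{H^s(\Omega)}$. That reduction is both unnecessary and fatal to the constant. As your own computation shows, the dilation $x \mapsto 2x$ produces factors $2^{2k-1}$ in the $k$th derivative terms, so the $[1/2,1]$ half alone contributes $2^{2s-1}\|f^{(s)}\|_{L^2}^2$ to $\|\tilde f\|_{H^s}^2$; for $s \ge 2$ this already exceeds $4\|f\|_{H^s}^2$, and no bookkeeping will collapse it to the clean bound $\opnorm{T}\le 2$. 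The point you needed to commit to is that Bessel requires only $\tilde f \in L^2$, so the estimate lives entirely at the $L^2$ level, where the change of variables carries only the harmless Jacobian $1/2$ and no derivative factors: $\|\tilde f\|_{L^2}^2 = \tfrac12\|f+p_f\|_{L^2}^2 + \tfrac12\|f\|_{L^2}^2$. You do write down exactly this identity in your final paragraph, but only as a fallback, and you then muddy it again by saying ``the derivative terms in $\|\tilde f\|_{H^s}^2$ must be handled by the same decomposition'' --- there are no derivative terms to handle.

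The second place where your argument loses the constant is the treatment of the coefficients $f^{(j)}(1)-f^{(j)}(0)$. Invoking the embedding $H^s \hookrightarrow C^{s-1}$ gives an unspecified (and $s$-dependent) constant per coefficient, which cannot combine with the Hermite bounds to yield $2$. The paper instead writes $f^{(j)}(1)-f^{(j)}(0) = \int_0^1 f^{(j+1)}(x)\,dx$ and applies Cauchy--Schwarz on the unit interval to get $|f^{(j)}(1)-f^{(j)}(0)| \le \|f^{(j+1)}\|_{L^2}$ with constant exactly $1$. Combining this with $\|p_{s,j}\|_{L^2}, \|q_{s,j}\|_{L^2} \le (1/2)^{(j+1)/2}$ from Lemma~\ref{lemma:hermite} and one more Cauchy--Schwarz over $j$ gives $\|p_f\|_{L^2} \le 2\big(\sum_j 2^{-(j+1)}\big)^{1/2}\big(\sum_j\|f^{(j+1)}\|_{L^2}^2\big)^{1/2} \le 2\|f\|_{H^s}$, whence $\|f+p_f\|_{L^2}\le 3\|f\|_{H^s}$ and the $L^2$ identity above closes the argument. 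So your structural outline (Bessel, change of variables, geometric smallness of the Hermite correction) matches the paper's, but the main route you propose would not prove the stated bound, and the two quantitative ingredients that make the constant work --- staying in $L^2$ for $\tilde f$, and the FTC/Cauchy--Schwarz bound on the jumps --- are missing or only hinted at.
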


\begin{proof}
Let $f\in H^{s}(\Omega)$. For any $j=0,\dots,s-1$ we have
\[|f^{(j)}(1)-f^{(j)}(0)|=\left|\int_{0}^{1}f^{(j+1)}(x)dx\right|\le\|f^{(j+1)}\|_{L^{2}(\Omega)}.\]
By Lemma~\ref{lemma:hermite}, we have
\[\|p_{f}\|_{L^{2}(\Omega)}\le 2\sum_{j=0}^{s-1}|f^{(j)}(1)-f^{(j)}(0)|\cdot\|p_{s,j}\|_{L^{2}(\Omega)}
\le2\sum_{j=0}^{s-1}\psjbound\|f^{(j+1)}\|_{L^{2}(\Omega)}.\]
Then, by the Cauchy-\review{Schwarz} inequality,
\begin{align*}
    \|p_{f}\|_{L^{2}(\Omega)}&\le 2\sqrt{\sum_{j=0}^{s-1}\left(\frac{1}{2}\right)^{j+1}}\sqrt{\sum_{j=0}^{s-1}\|f^{(j+1)}\|_{L^{2}(\Omega)}^{2}}\le \\ &\le 2\sqrt{\sum_{j=0}^{+\infty}\left(\frac{1}{2}\right)^{j+1}}\|f\|_{H^{s}(\Omega)}= 2\|f\|_{H^{s}(\Omega)}.
    \end{align*}
Consequently,
\[
    \|f+p_{f}\|_{L^{2}(\Omega)}\le\|f\|_{L^{2}(\Omega)}+\|p_{f}\|_{L^{2}(\Omega)}\le 3\|f\|_{H^{s}(\Omega)}.
\]
We now note that a simple change of variables yields
\[\|\tilde{f}\|_{L^{2}(\Omega)}^{2}=\frac{1}{2}\|f+p_{f}\|_{L^{2}(\Omega)}^{2}+\frac{1}{2}\|f\|_{L^{2}(\Omega)}^{2},\]
implying that
\[\|\tilde{f}\|_{L^{2}(\Omega)}\le \frac{3}{2}\|f\|_{H^{s}(\Omega)}+\frac{1}{2}\|f\|_{L^{2}(\Omega)}\le 2\|f\|_{H^{s}(\Omega)}.\]
Finally, we note that $T$ maps $f$ onto the truncated Fourier coefficients of $\tilde{f}$. In particular, for $\|\cdot\|_{2}$ the Euclidean norm,
\[\|Tf\|_{2}\le\|\tilde{f}\|_{L^{2}(\Omega)}\le2\|f\|_{H^{s}(\Omega)},\]
as claimed.
\end{proof}

\begin{figure}
    \centering
    \includegraphics[width=\textwidth]{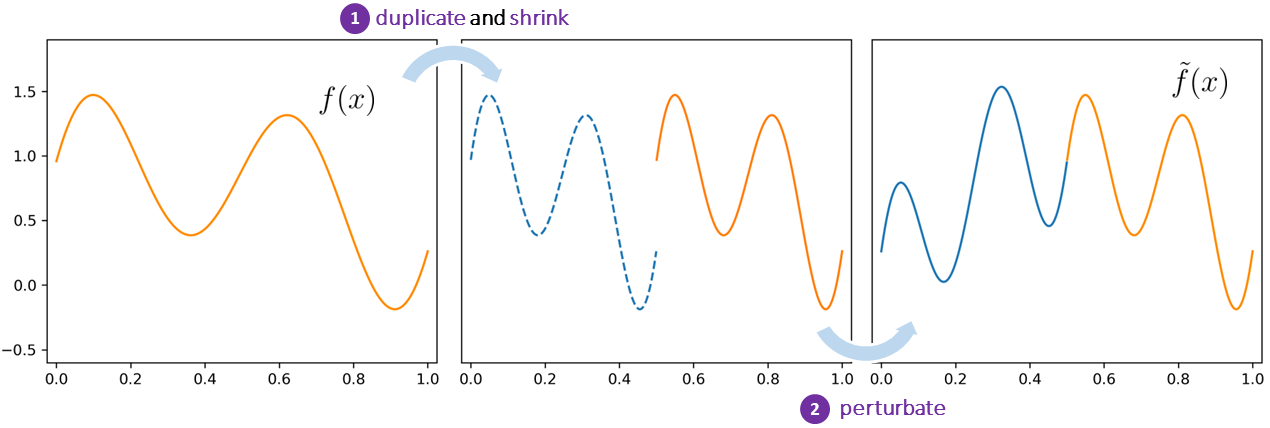}
    \caption{Visualization of the transformation $f\mapsto \tilde{f}$ used in Lemma~\ref{lemma:T}. The signal $f$ is duplicated and a polynomial perturbation is added to ensure (smooth) periodicity.}
    \label{fig:periodicization}
\end{figure}

\section{Auxiliary results on ReLU networks}
\label{appendix:relu}

This Appendix contains some technical details about the interplay between linear networks and ReLU networks. As noted in the proof of Theorem~\ref{theorem:cnn}, these considerations are fundamental, as they allow us to adapt \cite[Theorem 1]{franco2023approximation} to our setting. 

\newcommand{\cb}{\mathbf{c}}
\begin{lemmaA}
    \label{lemma:relu} Let $\Psi:\mathbb{R}^{m}\to\mathbb{R}^{N}$ be a linear network (no activations nor biases at any level). For every compact set $C\subset\mathbb{R}^{m}$, there exists a ReLU network $\tilde{\Psi}$ having the same architecture (and the same weights), such that $\tilde{\Psi}(\cb)=\Psi(\cb)$ for all $\cb\in C.$
\end{lemmaA}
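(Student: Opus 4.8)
The statement to prove is Lemma~\ref{lemma:relu}: a linear network $\Psi:\mathbb{R}^{m}\to\mathbb{R}^{N}$ (no activations, no biases) can be replaced, on any compact set $C$, by a ReLU network $\tilde\Psi$ with the same architecture and the same weights that agrees with $\Psi$ on $C$.

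\textbf{Plan of proof.} The plan is to exploit the elementary identity $a = \dnnactivation(a + b) - b$, valid whenever $a + b \ge 0$, together with the fact that on a compact set all intermediate activations of a linear network are uniformly bounded below. First I would write $\Psi = L_{\ell+1}\circ L_{\ell}\circ\cdots\circ L_{1}$, where each $L_{i}(\mathbf{v}) = \weight_{i}\mathbf{v}$ is a purely linear map (the matrices $\weight_{i}$ being the given weights). For each $i$, let $V_{i}\subset\mathbb{R}^{n_{i}}$ denote the image of $C$ under the first $i$ linear maps; since $C$ is compact and linear maps are continuous, each $V_{i}$ is compact, hence contained in a box $\prod_{k}[\,-M_{i},M_{i}\,]$ for some $M_{i}>0$. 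The idea is then to process the network layer by layer, at each hidden layer inserting a shift by $M_{i}\mathbf{1}$ before applying ReLU and an equal and opposite shift $-M_{i}\mathbf{1}$ that gets absorbed into the bias of the next layer: since every coordinate of every vector in $V_{i}$ is $\ge -M_{i}$, we have $\dnnactivation(\weight_{i}\mathbf{v} + M_{i}\mathbf{1}) = \weight_{i}\mathbf{v} + M_{i}\mathbf{1}$ for $\mathbf{v}$ arising from inputs in $C$, so the ReLU acts as the identity there. Crucially, these shifts are realized by adding bias vectors \emph{only}; the weight matrices $\weight_{i}$ are left untouched, so $\tilde\Psi$ has exactly the same architecture (same layer dimensions, same sparsity pattern of weights) and the same weights as $\Psi$, with the single difference that biases, previously zero, are now nonzero. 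One then checks by induction on $i$ that, for every $\mathbf{x}\in C$, the output of the first $i$ layers of $\tilde\Psi$ equals the output of the first $i$ layers of $\Psi$, and in particular $\tilde\Psi(\mathbf{x}) = \Psi(\mathbf{x})$ for all $\mathbf{x}\in C$ (the last layer $L_{\ell+1}$ is affine in both networks and here carries the final compensating bias $-\weight_{\ell+1}M_{\ell}\mathbf{1}$, or no shift at all if one chooses not to shift the last hidden layer).

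\textbf{Bookkeeping and the CNN case.} The only care needed is propagating the bias compensation correctly: if at layer $i$ we replace $\weight_{i}\mathbf{v}$ by $\dnnactivation(\weight_{i}\mathbf{v} + M_{i}\mathbf{1})$, then the next layer sees $\weight_{i}\mathbf{v} + M_{i}\mathbf{1}$ instead of $\weight_{i}\mathbf{v}$, so its bias must be set to $-M_{i}\weight_{i+1}\mathbf{1} + M_{i+1}\mathbf{1}$ (for a hidden layer) to restore both the correct pre-activation value and the next shift. A clean way to present this is to define $\mathbf{b}_{i}$ recursively and verify the induction hypothesis ``the $i$th pre-activation of $\tilde\Psi$, evaluated at $\mathbf{x}\in C$, equals (the $i$th linear image of $\mathbf{x}$ under $\Psi$) $+ M_{i}\mathbf{1}$''. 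Since the shifts $M_{i}\mathbf{1}$ are constant vectors, adding them to the pre-activation of a layer is exactly the operation of adding a bias, which does not alter the weight tensor — and in the convolutional case a constant shift on all entries of a given output channel is likewise realized purely through the bias matrix $\mathbf{B}$ of Definition~\ref{def:conv} or~\ref{def:tconv}, leaving the weight tensor $\mathbf{W}$ (hence the kernel size, stride, dilation, grouping, and channel counts) unchanged; reshape operations are unaffected. Thus the construction applies verbatim to linear CNNs, which is the case actually needed in Step~2 of the proof of Theorem~\ref{theorem:cnn}.

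\textbf{Main obstacle.} There is no deep obstacle here; the lemma is essentially a standard ``ReLU can emulate the identity on a bounded set'' trick. The only mildly delicate point is the statement ``same architecture (and the same weights)'': one must be careful that the added biases do not count as a change of architecture in the sense used by the paper (Definition~\ref{def:layer} already allows biases, and Definitions~\ref{def:conv}--\ref{def:tconv} already include bias matrices), and that the depth and size bounds invoked later are not inflated — size can only grow by the number of newly nonzero bias entries, which is $O(\sum_i n_i)$ and is harmless for the final complexity bounds, while depth is unchanged. I would state these observations explicitly so that Corollary~\ref{corollary:relu} (not shown here) can be applied directly in the proof of Theorem~\ref{theorem:cnn}.
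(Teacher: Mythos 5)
Your proposal is correct and follows essentially the same strategy as the paper's proof: insert biases at the hidden layers to push the pre-activations (whose images of the compact set $C$ are bounded below entrywise) into the nonnegative orthant where the ReLU acts as the identity, and cancel the shifts with a compensating bias, leaving all weight matrices untouched. The only difference is bookkeeping --- you undo each shift in the next layer's bias using box bounds $M_i\mathbf{1}$, whereas the paper lets the shifts accumulate (choosing each bias as the exact entrywise minimum of the propagated pre-activation) and cancels them all at once in the terminal bias $\mathbf{b}_{\ell+1}$.
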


\begin{proof}
    In plain words, the idea is to introduce suitable biases at the internal layers that can shift neuron entries to nonnegative values (which would be unaffected by ReLUs). Then, a terminal bias is used to shift the output back to the desired value. We shall now discuss the whole idea in a more rigorous way. 
    Let $\ell$ be the number of hidden layers in $\Psi$. Since $\Psi$ is linear, it must be of the form
    \[\Psi(\cb)=\weight_{\ell+1}\cdot \dots\cdot \weight_{1}\cb,\]
    where $\weight_{i}$, $i=1,\dots,l+1$, are the matrices representing the action of the $i$th layer, respectively. 
    Let us introduce the following notation
    \[\weight_{i\to j}:=\prod_{k=i}^{j}\weight_{k},\]
    defined for all pairs $1\le i\le j\le \ell+1.$
    We construct a sequence of biases $\mathbf{b}_{0},\dots,\mathbf{b}_{\ell+1},$ via the iterative scheme below,
    \begin{equation}
    \label{eq:biases}
        \begin{cases}
            \mathbf{b}_{0}=0,&\\
            \mathbf{b}_{i}=\displaystyle-\min_{\cb\in C}\left(\weight_{1\to i}\cb+\sum_{k=0}^{i-1}\weight_{k+1\to i}\mathbf{b}_{k}\right),&i=1,\dots,\ell,\\
            \mathbf{b}_{\ell+1}=\displaystyle -\sum_{k=0}^{\ell}\weight_{k+1\to \ell+1}\mathbf{b}_{k}&
        \end{cases}
    \end{equation}
    the minimum being defined entrywise (note that all minima are well-defined due compactness of $C$).
    For $\dnnactivation$ the ReLU activation function, consider the layers
    \[L_{i}: x\mapsto \dnnactivation\left(\weight_{i}\cb+\mathbf{b}_{i}\right),\]
    defined for $i=1,\dots,\ell.$ We claim that the ReLU network
    \[\tilde{\Psi}:=\weight_{\ell+1}(L_{\ell}\circ\dots\circ L_{1})(\cb)+\mathbf{b}_{\ell+1}\]
    coincides with $\Psi$ over $C.$
    To see this, we start by noting that for all $\cb\in C$, due to \eqref{eq:biases}, we have
    \[\mathbf{b}_{1}\ge - \weight_{1\to 1}\cb -\cancel{\weight_{1\to1}\mathbf{b}_{0}}=-\weight_{1}\cb,\]
    implying that $\weight_{1}\cb+\mathbf{b}_{1}$ has nonnegative entries. Consequently,
    \[L_{1}(\cb)=\dnnactivation\left(\weight_{1}\cb+\mathbf{b}_{1}\right)=\weight_{1}\cb+\mathbf{b}_{1}.\]
    Similarly,
    \[\mathbf{b}_{2}\ge - \weight_{1\to 2}\cb -\cancel{\weight_{1\to2}\mathbf{b}_{0}}-\weight_{2\to2}\mathbf{b}_{1}=-\weight_{2}\weight_{1}\cb-\weight_{2}\mathbf{b}_{1},\]
    implying that
    \[\weight_{2}\weight_{1}\cb+\weight_{2}\mathbf{b}_{1}+\mathbf{b}_{2},\]
    has nonnegative entries, and thus
    \[L_{2}(L_{1}(\cb))=L_{2}(\weight_{1}\cb+\mathbf{b}_{1})=\dnnactivation\left(\weight_{2}\weight_{1}\cb+\weight_{2}\mathbf{b}_{1}+\mathbf{b}_{2}\right)=\weight_{2}\weight_{1}\cb+\weight_{2}\mathbf{b}_{1}+\mathbf{b}_{2}.\]
    Iterating the above argument, one can easily see that
    \begin{multline*}
    \left(L_{\ell}\circ\dots\circ L_{1}\right)(\cb)=\weight_{\ell}\cdot\dots\cdot\weight_{1}\cb + \weight_{\ell}\cdot\dots\cdot\weight_{2}\mathbf{b}_{1}+\weight_{\ell}\cdot\dots\cdot\weight_{3}\mathbf{b}_{2}+\dots+\mathbf{b}_{\ell}=\\=\weight_{1\to \ell}\cb + \sum_{k=0}^{\ell-1}\weight_{k+1\to \ell}\mathbf{b}_{k}+\mathbf{b}_{\ell},\end{multline*}
    for all $\cb\in C$. Then,
    \begin{multline*}
    \tilde{\Psi}(\cb)=\weight_{\ell+1}\weight_{1\to \ell}\cb + \sum_{k=0}^{\ell-1}\weight_{\ell+1}\weight_{k+1\to \ell}\mathbf{b}_{k}+\weight_{\ell+1}\mathbf{b}_{\ell}+\mathbf{b}_{\ell+1}=\\=
    \weight_{1\to \ell+1}\cb + \sum_{k=0}^{\ell-1}\weight_{k+1\to \ell+1}\mathbf{b}_{k}+\weight_{\ell+1}\mathbf{b}_{\ell}+\mathbf{b}_{\ell+1}=\\=
    \weight_{1\to \ell+1}\cb + \cancel{\sum_{k=0}^{\ell}\weight_{k+1\to \ell+1}\mathbf{b}_{k}}+\cancel{\mathbf{b}_{\ell+1}}=\weight_{1\to \ell+1}\cb=\Psi(\cb).\end{multline*}
    In particular, $\tilde{\Psi}_{|C}\equiv\Psi_{|C}$, as wished.
\end{proof}

\newtheorem{corollaryA}{Corollary}[section]
\begin{corollaryA}
    \label{corollary:relu} Let $\Psi:\mathbb{R}^{m}\to\mathbb{R}^{N}$ be a linear CNN (no activations nor biases at any level). For every compact set $C\subset\mathbb{R}^{m}$, there exists a ReLU CNN $\tilde{\Psi}$ having the same architecture (and the same weights), such that $\tilde{\Psi}(\cb)=\Psi(\cb)$ for all $\cb\in C.$
\end{corollaryA}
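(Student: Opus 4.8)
The plan is to derive Corollary~\ref{corollary:relu} from Lemma~\ref{lemma:relu} essentially for free, by checking that the bias-insertion construction used there is compatible with the convolutional and reshape layer types. Recall (Definition~\ref{def:cnn}) that a linear CNN $\Psi:\mathbb{R}^{m}\to\mathbb{R}^{N}$ is a composition $L_{k}\circ\dots\circ L_{1}$ in which each $L_{i}$ is either a (transposed) convolutional layer with identity activation and zero bias, or a reshape operation; after vectorization, each convolutional layer acts as a standard linear layer $\cb\mapsto\weight_{i}\cb$ whose weight matrix $\weight_{i}$ is sparse with shared entries (cf.\ \cite{petersen2020equivalence}), while each reshape acts as a linear bijection. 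In particular, $\Psi$ is a linear network in the sense of Lemma~\ref{lemma:relu}, so that lemma already produces a ReLU network $\tilde{\Psi}$ with the same weights; the point is to verify that $\tilde{\Psi}$ can again be realized as a CNN.

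Next I would revisit the construction in the proof of Lemma~\ref{lemma:relu} and observe that it modifies $\Psi$ only by adding bias vectors $\mathbf{b}_{1},\dots,\mathbf{b}_{k}$ to the layers, via the recursive scheme \eqref{eq:biases}, chosen so that the pre-activation of each layer is entrywise nonnegative on $C$ (and, inductively, on the compact forward images of $C$), whence every ReLU acts as the identity and the network stays affine-linear on $C$; a final correcting bias cancels the accumulated linear distortion. Two remarks make this work in the convolutional setting: first, (transposed) convolutional layers do carry an arbitrary bias matrix $\mathbf{B}\in\mathbb{R}^{m'\times n_{\mathrm{out}}}$ (Definitions~\ref{def:conv} and \ref{def:tconv}), so after vectorization an arbitrary bias vector is available and can be inserted exactly as in \eqref{eq:biases}; second, reshape operations carry no activation, hence require no modification and simply pass through the argument unchanged, with $\weight_{i\to j}$ now denoting the composition of the corresponding convolutional and reshape matrices. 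Since only biases are introduced, the convolutional weight tensors — and therefore the sparsity and weight-sharing patterns that encode the convolutional structure — are left untouched, so $\tilde{\Psi}$ is again a CNN with the same architecture and weights, i.e.\ a ReLU CNN.

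The only minor point to be careful about is the output: as in Definition~\ref{def:dnn}, the last convolutional layer of $\tilde{\Psi}$ should be taken with the identity activation (an affine layer) and should absorb the final correcting bias $\mathbf{b}_{k+1}$; if the CNN ends with one or more reshape operations after its last convolutional layer, this bias is pushed through them using their linearity, i.e.\ replaced by its preimage under the corresponding reshape maps before being added to that layer. Apart from this and the (routine) observation that the compactness of the forward images of $C$ is unaffected by interleaved reshape operations — since reshapes are continuous bijections — there is no genuine obstacle here: all the content of the statement already lies in Lemma~\ref{lemma:relu}, and the corollary is a bookkeeping exercise confirming that the construction respects the convolutional and reshape layer types.
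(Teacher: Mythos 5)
Your proposal is correct and follows essentially the same route as the paper: the corollary is deduced directly from Lemma~\ref{lemma:relu} by observing that the construction there only inserts biases (which Definitions~\ref{def:conv} and \ref{def:tconv} permit as full bias matrices) and leaves the convolutional weight tensors, and hence the architecture, untouched. Your additional bookkeeping about reshape layers and the terminal bias is more explicit than the paper's two-line argument but adds nothing essentially different.
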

\begin{proof}
    This is a direct consequence of Lemma~\ref{lemma:relu}. In fact, convolutional layers are uniquely characterized by the fact of having a linear component that acts as a convolution operator. Since, in the lemma, the transformation $\Psi\to\tilde{\Psi}$  preserves the linear part of each layer, the conclusion follows.
\end{proof}










\begin{thebibliography}{99}

\bibitem{adcock2022deep}
\newblock B. Adcock, S. Brugiapaglia, N. Dexter, and S. Moraga,
\newblock \textnormal{Deep neural networks are effective at learning high-dimensional Hilbert-valued functions from limited data},
\newblock In \emph{Proceedings of the 2nd Mathematical and Scientific Machine Learning Conference, Proceedings of Machine Learning Research}, \textbf{145} (2022), pp.~1-36.

\bibitem{adcock2022learning}
\newblock B. Adcock, S. Brugiapaglia, N. Dexter, and S. Moraga,
\newblock \textnormal{Near-optimal learning of Banach-valued, high-dimensional functions via deep neural networks},
\newblock \emph{arXiv preprint} arXiv:2211.12633 (2022).

\bibitem{adcock2024learning}
\newblock B. Adcock, S. Brugiapaglia, N. Dexter, and S. Moraga,
\newblock \textnormal{Learning smooth functions in high dimensions: From sparse polynomials to deep neural networks},
\newblock In \emph{Handbook of Numerical Analysis}, In press, (2024).

\bibitem{adcock2022sparse}
\newblock B. Adcock, S. Brugiapaglia, and C.G. Webster,
\newblock \textnormal{Sparse Polynomial Approximation of High-Dimensional Functions}, 
\newblock \emph{Society for Industrial and Applied Mathematics}, Philadelphia, PA, 2022

\bibitem{adcock2021gap}
\newblock B. Adcock and N. Dexter,
\newblock \textnormal{The gap between theory and practice in function approximation with deep neural networks},
\newblock \emph{SIAM Journal on Mathematics of Data Science}, \textbf{3}(2) (2021), pp.~624-655.

\bibitem{ahmed2020reduced}
\newblock S.E. Ahmed, S. Pawar, O. San, and A. Rasheed,
\newblock \textnormal{Reduced order modeling of fluid flows: Machine learning, Kolmogorov barrier, closure modeling, and partitioning},
\newblock In \emph{AIAA Aviation 2020 Forum} (2020), p.~2946.

\bibitem{barnett2022quadratic}
\newblock J. Barnett and C. Farhat,
\newblock \textnormal{Quadratic approximation manifold for mitigating the Kolmogorov barrier in nonlinear projection-based model order reduction},
\newblock \emph{Journal of Computational Physics}, \textbf{464} (2022), p.~111348.

\bibitem{boon2023deep}
\newblock W.M. Boon, N.R. Franco, A. Fumagalli, and P. Zunino,
\newblock \textnormal{Deep learning based reduced order modeling of Darcy flow systems with local mass conservation},
\newblock \emph{arXiv preprint} arXiv:2311.14554 (2023).

\bibitem{brivio2023error}
\newblock S. Brivio, S. Fresca, N.R. Franco, and A. Manzoni,
\newblock \textnormal{Error estimates for POD-DL-ROMs: a deep learning framework for reduced order modeling of nonlinear parametrized PDEs enhanced by proper orthogonal decomposition},
\newblock \emph{arXiv preprint} arXiv:2305.04680 (2023).

\bibitem{cheng2018deep}
\newblock Z. Cheng, H. Sun, M. Takeuchi, and J. Katto,
\newblock \textnormal{Deep convolutional autoencoder-based lossy image compression},
\newblock In \emph{2018 Picture Coding Symposium (PCS)}, June 2018, pp.~253-257. IEEE.

\bibitem{cohen2015approximation}
\newblock A. Cohen and R. DeVore,
\newblock \textnormal{Approximation of high-dimensional parametric PDEs},
\newblock \emph{Acta Numerica}, \textbf{24} (2015), pp.~1-159.

\bibitem{cohen2018shape}
\newblock A. Cohen, C. Schwab, and J. Zech,
\newblock \textnormal{Shape holomorphy of the stationary Navier--Stokes equations},
\newblock \emph{SIAM Journal on Mathematical Analysis}, \textbf{50}(2) (2018), pp.~1720-1752.

\bibitem{daubechies2022nonlinear}
\newblock I. Daubechies, R. DeVore, S. Foucart, B. Hanin, and G. Petrova,
\newblock \textnormal{Nonlinear approximation and (deep) ReLU networks},
\newblock \emph{Constructive Approximation}, \textbf{55}(1) (2022), pp.~127-172.

\bibitem{de1966splines}
\newblock C. De Boor and R.E. Lynch,
\newblock \textnormal{On splines and their minimum properties},
\newblock \emph{Journal of Mathematics and Mechanics}, \textbf{15}(6) (1966), pp.~953-969.


\bibitem{dugundji1951extension}
\newblock J. Dugundji,
\newblock \textnormal{An extension of Tietze's theorem},
\newblock \emph{Pacific J. Math.}, \textbf{1} (1951), pp.~353–367.

\bibitem{elbrachter2021deep}
\newblock D. Elbr{\"a}chter, D. Perekrestenko, P. Grohs, and H. B{\"o}lcskei,
\newblock \textnormal{Deep neural network approximation theory},
\newblock \emph{IEEE Transactions on Information Theory}, \textbf{67}(5) (2021), pp.~2581-2623.

\bibitem{evans2022partial}
\newblock L.C. Evans, 
\newblock \textnormal{Partial differential equations},
\newblock vol.~19 of Grad.\ Stud.\ Math., \emph{American Mathematical Society}, Providence, RI, 2nd ed., 2010.

\bibitem{fefferman2014sobolev}
\newblock C. Fefferman, A. Israel, and G. Luli,
\newblock \textnormal{Sobolev extension by linear operators},
\newblock \emph{Journal of the American Mathematical Society}, \textbf{27}(1) (2014), pp.~69--145.


\bibitem{franco2023latent}
\newblock N.R. Franco, D. Fraulin, A. Manzoni, and P. Zunino,
\newblock \textnormal{On the latent dimension of deep autoencoders for reduced order modeling of PDEs parametrized by random fields},
\newblock \emph{arXiv preprint} arXiv:2310.12095 (2023).

\bibitem{franco2023approximation}
\newblock N.R. Franco, S. Fresca, A. Manzoni and P. Zunino,
\newblock \textnormal{Approximation bounds for convolutional neural networks in operator learning},
\newblock \emph{Neural Networks}, \textbf{161} (2023), pp.~129-141.

\bibitem{franco2023deep}
\newblock N.R. Franco, A. Manzoni, and P. Zunino,
\newblock \textnormal{A deep learning approach to reduced order modelling of parameter dependent partial differential equations},
\newblock \emph{Mathematics of Computation}, \textbf{92}(340) (2023), pp.~483-524.

\bibitem{franco2023mesh-informed}
\newblock N.R. Franco, A. Manzoni, and P. Zunino,
\newblock \textnormal{Mesh-informed neural networks for operator learning in finite element spaces},
\newblock \emph{Journal of Scientific Computing}, \textbf{97}(2) (2023), 35.

\bibitem{frankle2019lottery}
\newblock J. Frankle and M. Carbin,
\newblock  The lottery ticket hypothesis: finding sparse, trainable neural networks,
\newblock In \emph{Proceedings of the 7th International Conference on Learning Representations (ICLR)} (2019).

\bibitem{fresca2022pod-dl-rom}
\newblock S. Fresca and A. Manzoni,
\newblock \textnormal{POD-DL-ROM: Enhancing deep learning-based reduced order models for nonlinear parametrized PDEs by proper orthogonal decomposition},
\newblock \emph{Computer Methods in Applied Mechanics and Engineering}, \textbf{388} (2022), p.~114181.

\bibitem{fresca2021comprehensive}
\newblock S. Fresca, L. Dede', and A. Manzoni,
\newblock \textnormal{A comprehensive deep learning-based approach to reduced order modeling of nonlinear time-dependent parametrized PDEs},
\newblock \emph{Journal of Scientific Computing}, \textbf{87} (2021), pp.~1-36.


\bibitem{hernandez2021deep}
\newblock Q. Hernandez, A. Badias, D. Gonzalez, F. Chinesta, and E. Cueto,
\newblock \textnormal{Deep learning of thermodynamics-aware reduced-order models from data},
\newblock \emph{Computer Methods in Applied Mechanics and Engineering}, \textbf{379} (2021), pp.~113763.

\bibitem{herrmann2022neural}
\newblock L. Herrmann, C. Schwab, and J. Zech,
\newblock \textnormal{Neural and GPC operator surrogates: construction and expression rate bounds},
\newblock \emph{arXiv preprint arXiv:2207.04950}, (2022).

\bibitem{hesthaven2018non-intrusive}
\newblock J.S. Hesthaven and S. Ubbiali,
\newblock \textnormal{Non-intrusive reduced order modeling of nonlinear problems using neural networks},
\newblock \emph{Journal of Computational Physics}, \textbf{363} (2018), pp.~55-78.

\bibitem{hornik1991approximation}
\newblock K. Hornik,
\newblock \textnormal{Approximation capabilities of multilayer feedforward networks},
\newblock \emph{Neural networks}, \textbf{4}(2) (1991), pp.~251-257.

\bibitem{kovachki2021universal}
\newblock N. Kovachki, S. Lanthaler, and S. Mishra,
\newblock \textnormal{On universal approximation and error bounds for Fourier neural operators},
\newblock \emph{The Journal of Machine Learning Research}, \textbf{22}(1) (2021), pp.~13237-13312.

\bibitem{kovachki2021neural}
\newblock N. Kovachki, Z. Li, B. Liu, K. Azizzadenesheli, K. Bhattacharya, A. Stuart, and A. Anandkumar,
\newblock \textnormal{Neural operator: Learning maps between function spaces},
\newblock \emph{arXiv preprint} arXiv:2108.08481 (2021).

\bibitem{kutyniok2022theoretical}
\newblock G. Kutyniok, P. Petersen, M. Raslan, and R. Schneider,
\newblock \textnormal{A theoretical analysis of deep neural networks and parametric PDEs},
\newblock \emph{Constructive Approximation}, \textbf{55}(1) (2022), pp.~73-125.

\bibitem{lanthaler2022error}
\newblock S. Lanthaler, S. Mishra, and G.E. Karniadakis,
\newblock \textnormal{Error estimates for DeePONets: A deep learning framework in infinite dimensions},
\newblock \emph{Transactions of Mathematics and Its Applications}, \textbf{6}(1), tnac001 (2022).

\bibitem{lanthaler2023operator}
\newblock S. Lanthaler,
\newblock \textnormal{Operator learning with PCA-Net: upper and lower complexity bounds},
\newblock \emph{Journal of Machine Learning Research}, \textbf{24}(318) (2023), pp.~1--67.

\bibitem{lecun1995convolutional}
\newblock Y. LeCun and Y. Bengio,
\newblock \textnormal{Convolutional networks for images, speech, and time series},
\newblock In \emph{The handbook of brain theory and neural networks}, MIT Press, \textbf{3361}(10) (1995).

\bibitem{lee2020model}
\newblock K. Lee and K.T. Carlberg,
\newblock \textnormal{Model reduction of dynamical systems on nonlinear manifolds using deep convolutional autoencoders},
\newblock \emph{Journal of Computational Physics}, \textbf{404} (2020), p.~108973.

\bibitem{li2020fourier}
\newblock Z. Li, N. Kovachki, K. Azizzadenesheli, B. Liu, K. Bhattacharya, A. Stuart, and A. Anandkumar,
\newblock \textnormal{Fourier neural operator for parametric partial differential equations},
\newblock \emph{arXiv preprint} arXiv:2010.08895 (2020).

\bibitem{liu2024generalization}
\newblock H. Liu, B. Dahal, R. Lai, W. Liao,
\newblock \textnormal{Generalization Error Guaranteed Auto-Encoder-Based Nonlinear Model Reduction for Operator Learning},
\newblock \emph{arXiv preprint arXiv:2401.10490},
2024.

\bibitem{lu2021learning}
\newblock L. Lu, P. Jin, G. Pang, Z. Zhang, and G.E. Karniadakis,
\newblock \textnormal{Learning nonlinear operators via DeepONet based on the universal approximation theorem of operators},
\newblock \emph{Nature Machine Intelligence}, \textbf{3}(3) (2021), pp.~218-229.

\bibitem{marcati2023exponential}
\newblock C. Marcati and C. Schwab,
\newblock \textnormal{Exponential convergence of deep operator networks for elliptic partial differential equations},
\newblock \emph{SIAM Journal on Numerical Analysis}, \textbf{61}(3) (2023), pp.~1513--1545.

\bibitem{masoumi-verki2023use}
\newblock S. Masoumi-Verki, F. Haghighat, N. Bouguila, and U. Eicker,
\newblock \textnormal{The use of GANs and transfer learning in model-order reduction of turbulent wake of an isolated high-rise building},
\newblock \emph{Building and Environment}, \textbf{246} (2023), p.~110948.

\bibitem{mishra2021enhancing}
\newblock S. Mishra and T. K. Rusch,
\newblock \textnormal{Enhancing Accuracy of Deep Learning Algorithms by Training with Low-Discrepancy Sequences},
\newblock \emph{SIAM Journal on Numerical Analysis}, \textbf{59}(3) (2021), pp.~1811-1834.

\bibitem{mucke2021reduced}
\newblock N.T. Mücke, S.M. Bohté, and C.W. Oosterlee,
\newblock \textnormal{Reduced order modeling for parameterized time-dependent PDEs using spatially and memory aware deep learning},
\newblock \emph{Journal of Computational Science}, \textbf{53} (2021), p.~101408.

\bibitem{oostwal2021hidden}
\newblock E. Oostwal, M. Straat, and M. Biehl,
\newblock \textnormal{Hidden Unit Specialization in Layered Neural Networks: ReLU vs. Sigmoidal Activation},
\newblock \emph{Physica A: Statistical Mechanics and its Applications}, \textbf{564} (2021), p.~125517.

\bibitem{peherstorfer2022breaking}
\newblock B. Peherstorfer,
\newblock \textnormal{Breaking the Kolmogorov barrier with nonlinear model reduction},
\newblock \emph{Notices of the American Mathematical Society}, \textbf{69}(5) (2022), pp.~725-733.

\bibitem{petersen2020equivalence}
\newblock P. Petersen and F. Voigtlaender,
\newblock \textnormal{Equivalence of approximation by convolutional neural networks and fully-connected networks},
\newblock \emph{Proceedings of the American Mathematical Society}, \textbf{148}(4) (2020), pp.~1567-1581.

\bibitem{pichi2023graph}
\newblock F. Pichi, B. Moya, and J.S. Hesthaven,
\newblock \textnormal{A graph convolutional autoencoder approach to model order reduction for parametrized PDEs},
\newblock \emph{arXiv preprint} arXiv:2305.08573 (2023).

\bibitem{pichi2023artificial}
\newblock F. Pichi, F. Ballarin, G. Rozza, and J.S. Hesthaven,
\newblock \textnormal{An artificial neural network approach to bifurcating phenomena in computational fluid dynamics},
\newblock \emph{Computers \& Fluids}, \textbf{254} (2023), p.~105813.

\bibitem{romor2022non-linear}
\newblock F. Romor, G. Stabile, and G. Rozza,
\newblock \textnormal{Non-linear manifold ROM with convolutional autoencoders and reduced over-collocation method},
\newblock \emph{arXiv preprint} arXiv:2203.00360 (2022).

\bibitem{rosafalco2021online}
\newblock L. Rosafalco, M. Torzoni, A. Manzoni, S. Mariani, and A. Corigliano,
\newblock \textnormal{Online structural health monitoring by model order reduction and deep learning algorithms},
\newblock \emph{Computers \& Structures}, \textbf{255} (2021), p.~106604.

\bibitem{schwab2023deep}
\newblock C. Schwab and J. Zech,
\newblock \textnormal{Deep learning in high dimension: neural network expression rates for analytic functions in $L^2(\mathbb{R}^d, \gamma_d)$},
\newblock \emph{SIAM/ASA Journal on Uncertainty Quantification}, \textbf{11}(1) (2023), pp.~199--234.


\bibitem{sun2018deep}
\newblock C. Sun, M. Ma, Z. Zhao, S. Tian, R. Yan, and X. Chen,
\newblock \textnormal{Deep transfer learning based on sparse autoencoder for remaining useful life prediction of tool in manufacturing},
\newblock \emph{IEEE Transactions on Industrial Informatics}, \textbf{15}(4) (2018), pp.~2416-2425.

\bibitem{vitullo2024nonlinear}
\newblock P. Vitullo, A. Colombo, N.R. Franco, A. Manzoni, and P. Zunino,
\newblock \textnormal{Nonlinear model order reduction for problems with microstructure using mesh informed neural networks},
\newblock \emph{Finite Elements in Analysis and Design}, \textbf{229} (2024), p.~104068.


\bibitem{yarotsky2017error}
\newblock D. Yarotsky,
\newblock \textnormal{Error bounds for approximations with deep ReLU networks},
\newblock \emph{Neural Networks}, \textbf{94} (2017), pp.~103-114.

\end{thebibliography}
\end{document}